\newtheorem*{theorem*}{Theorem}
\newtheorem{theorem}{Theorem}[section]
\newtheorem{lemma}[theorem]{Lemma}
\newtheorem{corollary}[theorem]{Corollary}
\newtheorem{proposition}[theorem]{Proposition}
\theoremstyle{definition}
\newtheorem{definition}[theorem]{Definition}
\theoremstyle{remark}
\newtheorem{remark}[theorem]{Remark}
\numberwithin{equation}{section}
\newcommand{\CC}{\mathbb C}
\newcommand{\HH}{\mathbb H}
\newcommand{\NN}{\mathbb N}
\newcommand{\QQ}{\mathbb Q}
\newcommand{\RR}{\mathbb R}
\newcommand{\ZZ}{\mathbb Z}
\newcommand{\SL}{\mathop{\mathrm {SL}}\nolimits}
\newcommand{\Sp}{\mathop{\mathrm {Sp}}\nolimits}
\newcommand{\latt}[1]{{\langle{#1}\rangle}}
\newcommand{\Ord}{\mathop{\mathrm {Ord}}\nolimits}
\newcommand{\m}{\operatorname{mod}}
\newcommand{\w}{\mathrm{w}}
\newcommand{\Tr}{\operatorname{Tr}}
\newenvironment{psmallmatrix}
  {\left(\begin{smallmatrix}}
{\end{smallmatrix}\right)}
\newcommand{\abs}[1]{\lvert#1\rvert}
\begin{document}

\title[Orbits of Jacobi forms and  Theta relations]{Orbits of Jacobi forms and  Theta relations}

\author{Valery Gritsenko}

\address{Laboratoire Paul Painlev\'{e}, Universit\'{e} de Lille and NRU HSE, Moscow}

\email{valery.gritsenko@univ-lille.fr}

\author{Haowu Wang}

\address{School of Mathematics and Statistics, Wuhan University, Wuhan 430072, Hubei, China}

\email{haowu.wangmath@whu.edu.cn}

\subjclass[2010]{11F50, 14K25}

\date{\today}

\keywords{Jacobi forms, Jacobi theta functions, Theta relations}

\begin{abstract}
Jacobi theta functions with rational characteristics can be viewed as vector-valued Jacobi forms. Theta relations usually correspond to different constructions of certain Jacobi forms. 
From this observation, we extract a new approach, which is called orbits of Jacobi forms, to produce identities on Jacobi theta functions. Our approach not only provides simple proofs of many known theta relations but also produces a large number of new identities, which can be considered as generalizations of Riemann's theta relations.  
\end{abstract}

\maketitle

\section{Introduction}
The Jacobi theta function with rational characteristic defined as
\begin{equation*}
\vartheta_{a,b}(\tau,z)
=\sum_{n\in \ZZ}e^{\pi i(n+a)^2\tau+2\pi i(n+a)(z+b)}
\end{equation*}
is a holomorphic function on $\HH \times\CC$ and plays a vital role in many areas \cite{Mu}, including the constructions of modular forms, the research of Abelian varieties and moduli spaces, and the theory of quadratic forms. There are a lot of interesting and useful identities on Jacobi theta functions, which are called theta relations. In this paper we introduce a new method to construct theta relations. Our approach is based on the theory of Jacobi forms. 

Jacobi forms \cite{EZ} are holomorphic functions on $\HH\times \CC$ which are modular in the first variable $\tau\in \HH$ and quasi doubly-periodic in the second variable $z\in \CC$. The automorphic group of Jacobi forms is the Jacobi group which is the semidirect product of $\SL_2(\ZZ)$ with the integral Heisenberg group. In particular, the real Heisenberg group acts on a Jacobi form of weight $k$ and index $t$ via 
\begin{equation*}
\left(\varphi\lvert_{k,t}[x,y;r]\right)(\tau,z)= e^{2\pi i t ( x^2\tau +2xz+xy+r)}  \varphi (\tau, z+ x \tau + y).
\end{equation*}
It is known \cite{GN} that the Jacobi triple product formula
\begin{equation*}
\vartheta(\tau,z) = q^{\frac{1}{8}}(\zeta^{\frac{1}{2}}- \zeta^{-\frac{1}{2}}) \prod_{n\geq 1} (1-q^n\zeta)(1-q^n \zeta^{-1})(1-q^n)
\end{equation*}
defines a holomorphic Jacobi form of  weight $\frac{1}{2}$ and index $\frac{1}{2}$ with a 
multiplier system of order $8$, where $q=e^{2\pi i\tau}$ and $\zeta=e^{2\pi iz}$. All Jacobi 
theta functions  with rational characteristics can be expressed by $\vartheta$. More precisely, 
$\vartheta_{a+\frac{1}{2},b+\frac{1}{2}}$ equals $\vartheta\lvert_{\frac{1}{2},\frac{1}{2}}
[a,b;0]$ up to a constant factor. From which we observe that Jacobi theta functions with certain characteristics constitute a vector-valued function. We analyze in \S \ref{sec:2} the 
behavior of this vector with respect to the full Jacobi modular group. Note that the 
quadratic form in the Jacobi theta functions is the odd form $x^2$.
Theta relations should come from different constructions of some Jacobi forms in terms of the vectors of theta-functions with characteristics. We next give an accurate description of this observation. We first embed the set 
\begin{equation*}
R_N:=\left( \frac{1}{N}, \frac{1}{N} \right) \cdot \SL_2(\ZZ)/ \ZZ^2=\left\lbrace \left(\frac{u}{N},\frac{v}{N}\right)\in\frac{1}{N}\ZZ^2 : 0\leq u,v <N, \, (u,v)\neq (0,0) \right\rbrace
\end{equation*}
in the  rational  Heisenberg group.
Then the vector-valued function $(\vartheta\lvert_{\frac{1}{2},\frac{1}{2}} Y)_{Y\in R_N}$, which is called {\it the orbit} of $\vartheta$, transforms like a vector-valued Jacobi form with additional non-regular correcting factors (see (\ref{orbit1}), (\ref{orbit2})).  Therefore, we are able to construct Jacobi forms by taking special combinations of vector-valued functions  $(\vartheta \lvert_{\frac{1}{2},\frac{1}{2}} Y)(\tau,0)$ and $(\vartheta\lvert_{\frac{1}{2},\frac{1}{2}} Y)(\tau,mz)$ with $m\geq 1$.
A general example of such combinations is given below.

\begin{theorem*}[see Theorem \ref{MTH}]
Let $N\geq 2$ and $a$, $b$, $c$, $d$ be non-negative integers satisfying that $N \lvert (b+2c)$ and $a+b+c+d \equiv 0 \m 2N$. Then the function
\begin{equation*}
\Tr_{a,b,c,d}^{(N)}:=\sum_{Y\in R_N} (\vartheta\lvert Y)^a(\tau,0)(\vartheta\lvert Y)^b(\tau,z)(\vartheta\lvert Y)^c(\tau,2z)(\vartheta\lvert Y)^d(\tau,Nz)
\end{equation*}
is a holomorphic Jacobi form of weight $\frac{a+b+c+d}{2}$ and index $\frac{b+4c+N^2d}{2}$ with a character of finite order.
\end{theorem*}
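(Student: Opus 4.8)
The plan is to verify directly the three defining conditions for $\Tr_{a,b,c,d}^{(N)}$ to be a holomorphic Jacobi form of weight $k=\tfrac{a+b+c+d}{2}$ and index $t=\tfrac{b+4c+N^{2}d}{2}$: holomorphy on $\HH\times\CC$ with a Fourier expansion bounded below at the cusp; the index-$t$ quasi-periodicity under the Heisenberg group; and the weight-$k$ transformation under $\SL_{2}(\ZZ)$ up to a finite character. The cusp condition is essentially automatic: each $\vartheta\lvert Y$ with $Y=(\tfrac uN,\tfrac vN)\in R_{N}$ equals, up to a constant, the classical theta function $\vartheta_{u/N+1/2,\,v/N+1/2}$, so it is holomorphic on $\HH\times\CC$ and has an expansion in $q^{1/2}=e^{\pi i\tau}$ with exponents $(n+\tfrac uN+\tfrac12)^{2}$, $n\in\ZZ$, which are non-negative and bounded below; substituting $z\mapsto mz$ does not change the $q$-exponents, forming products adds exponents, and $R_{N}$ is finite, so $\Tr_{a,b,c,d}^{(N)}$ is holomorphic on $\HH\times\CC$ and its $q$-expansion is supported on non-negative rational exponents with bounded denominator.

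For the two transformation laws the engine is the orbit transformation formulas (\ref{orbit1})--(\ref{orbit2}) of \S\ref{sec:2}. Schematically, for $M\in\SL_{2}(\ZZ)$ one has $\vartheta\lvert_{\frac12,\frac12}Y\lvert_{\frac12,\frac12}M=v_{\vartheta}(M)\,\xi_{M}(Y)\,\vartheta\lvert_{\frac12,\frac12}(Y\!\cdot\!M)$, where $Y\mapsto Y\!\cdot\!M$ is the (permutation) right action of $\SL_{2}(\ZZ)$ on $R_{N}$, $v_{\vartheta}$ is the order-$8$ multiplier of $\vartheta$, and the correcting factor $\xi_{M}(Y)$ is a root of unity that does not depend on $z$; likewise an integral shift of the $z$-argument of $\vartheta\lvert_{\frac12,\frac12}Y$ reproduces it up to the weight-$\tfrac12$ index-$\tfrac12$ automorphy factor and a $Y$-dependent root of unity, with no permutation of $R_{N}$. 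Since $\xi_{M}(Y)$ is independent of $z$, replacing $z$ by $mz$ turns the index-$\tfrac12$ slash into the index-$\tfrac{m^{2}}{2}$ one while keeping the same $\xi_{M}(Y)$.

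I would then run the two checks summand by summand. For the Heisenberg part, applying $z\mapsto z+\lambda\tau+\mu$ with $\lambda,\mu\in\ZZ$ shifts the arguments $0,z,2z,Nz$ by $0,\ \lambda\tau+\mu,\ 2(\lambda\tau+\mu),\ N(\lambda\tau+\mu)$, which produces the combined automorphy factor of index $\tfrac12(a\cdot0^{2}+b\cdot1^{2}+c\cdot2^{2}+d\cdot N^{2})=t$ together with, per summand, a root of unity equal, up to a sign depending only on $(\lambda,\mu)$, to $e^{2\pi i(b+2c)(\mu u-\lambda v)/N}$ (the $Nz$-factor contributing only a sign, the constant-argument factor nothing); the hypothesis $N\mid(b+2c)$ is exactly what forces this to be $1$, so that---integral $z$-shifts fixing $R_{N}$ pointwise---the sum over $R_{N}$ reproduces $\Tr_{a,b,c,d}^{(N)}$ up to the index-$t$ automorphy factor times a $(\lambda,\mu)$-sign. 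For the $\SL_{2}(\ZZ)$ part it is enough to treat the generators $T=\begin{psmallmatrix}1&1\\0&1\end{psmallmatrix}$ and $S=\begin{psmallmatrix}0&-1\\1&0\end{psmallmatrix}$: applying $M\in\{S,T\}$ multiplies each of the $a+b+c+d$ factors by $v_{\vartheta}(M)\,\xi_{M}(Y)$ and by the relevant weight/index automorphy, so the product acquires $\bigl(v_{\vartheta}(M)\xi_{M}(Y)\bigr)^{a+b+c+d}$ times the weight-$k$ index-$t$ automorphy factor; after reindexing the sum by the permutation $Y\mapsto Y\!\cdot\!M^{-1}$ one recovers $\Tr_{a,b,c,d}^{(N)}$ as soon as $\xi_{M}(Y)^{a+b+c+d}$ is independent of $Y$. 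Computing $\xi_{S}$ and $\xi_{T}$ from the classical transformation laws of theta with characteristics---after pushing the transformed characteristic back into the range $0\le u,v<N$---shows $\xi_{M}(Y)^{a+b+c+d}$ is constant exactly when $a+b+c+d\equiv0\pmod{2N}$, and the resulting constants together with $v_{\vartheta}(M)^{a+b+c+d}$ assemble, via the relations $S^{4}=(ST)^{6}=\mathrm{id}$ and $S^{2}=(ST)^{3}$ inherited from the theta multiplier, into a finite-order character of $\SL_{2}(\ZZ)$.

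The main obstacle is this last piece of bookkeeping. Although $\xi_{M}(Y)$ is a root of unity, its $Y$-dependence is not merely multiplicative: pushing the $M$-transformed characteristic into the fundamental range introduces case-dependent phases (for $T$, a factor that differs according as $u+v<N$ or $u+v\ge N$), and one must check that this $Y$-dependence is annihilated after raising to the $(a+b+c+d)$-th power---this is precisely where the modulus $2N$ is forced and where the argument needs care. Once the constancy of $\xi_{M}(Y)^{a+b+c+d}$ and of the corresponding Heisenberg phases is secured, the rest of the verification is routine.
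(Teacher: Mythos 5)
Your proposal is correct and follows essentially the same route as the paper: both verify the Heisenberg and $\SL_2(\ZZ)$ transformations summand by summand via the orbit formulas, with $N\mid(b+2c)$ cancelling the Heisenberg phases and $2N\mid(a+b+c+d)$ cancelling the $\SL_2(\ZZ)$ correcting factors before reindexing by the permutation of $R_N$. The ``bookkeeping obstacle'' you flag is already dispatched in the paper by the uniform formula (\ref{orbit1}), whose correcting factor $\upsilon_{H}^{2t}([Y_1M-Y_2;0])\exp(2\pi i t\det(Y_2;Y_1M))$ is valid for every $M$ at once and visibly becomes trivial under the stated congruences, so no generator-by-generator case analysis is needed.
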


Given parameters, if $\Tr_{a,b,c,d}^{(N)}=0$ then we get a theta relation. If $\Tr_{a,b,c,d}^{(N)}\neq 0$, then it is possible to find basic Jacobi forms, such as Jacobi--Eisenstein series or nice generators of a certain space of Jacobi forms, to represent $\Tr_{a,b,c,d}^{(N)}$, because the corresponding space of Jacobi forms is finite dimensional. In this case, we obtain a generalized theta relation. By applying the above theorem to different parameters, more than fifty identities are derived (see Corollaries \ref{coro1}, \ref{coro2}, \ref{coro3}).  

In section \ref{sec:4} we also establish formulas to represent holomorphic Jacobi forms of weight $6$ and weak Jacobi forms of weight $0$ in terms of Jacobi theta functions of order $2$ (see Proposition \ref{prop02} and Corollary \ref{cor:weak}).  

The paper is organized as follows. In section \ref{sec:2}, we recall some basics on Jacobi forms of integral and half-integral index, as well as Jacobi theta functions. We also explain the action of the real Heisenberg group on Jacobi forms. Section \ref{sec:3} is devoted to the introduction of orbits of Jacobi forms. In section \ref{sec:4}, we apply the technique of orbits of $\vartheta$ to the constructions of Jacobi forms and of theta relations. In section \ref{sec:5}, we present several applications of our formulas.

\section{Jacobi forms and Jacobi theta functions}\label{sec:2}

\subsection{Definition of Jacobi forms} 
In this subsection we introduce Jacobi forms of integral and half-integral index. We refer to \cite{EZ, GJ, GN} for more details. 

Jacobi forms appear naturally as Fourier--Jacobi coefficients of Siegel modular forms of genus 2. In what follows we use this fact to explain the action of the real Jacobi group on the space of holomorphic functions on $\HH\times \CC$. Let 
$$ \HH_2=\left\{Z\in M_2(\CC) : Z={Z^t},\; \mathrm{Im}(Z)>0 \right\}$$
be the Siegel upper-half space of genus 2, and let $\Sp_2(\RR)$ be the real symplectic group of rank 2. We define the standard slash operator $\lvert_k$ ($k\in \frac{1}{2}\ZZ$) on the space of functions on $\HH_2$ as 
$$ (F\lvert_k M)(Z):= \det (CZ+D)^{-k}F(M\latt{Z})$$
where 
$$
M=\left(\begin{array}{cc}
A & B \\ 
C & D
\end{array}  \right) \in \Sp_2(\RR) \quad \text{and} \quad M\latt{Z}:=(AZ+B)(CZ+D)^{-1}.
$$

Let us consider the real Jacobi group $\Gamma^J(\RR)=\Gamma_{\infty}(\RR)/ \{\pm I_4\}$, where $\Gamma_{\infty}(\RR)$ is a maximal parabolic subgroup of the real symplectic group defined by
$$ \Gamma_{\infty}(\RR):=\left\{ \left(\begin{array}{cccc}
* & 0 & * & * \\ 
* & * & * & * \\ 
* & 0 & * & * \\ 
0 & 0 & 0 & *
\end{array}  \right) \in \Sp_2(\RR) \right\}.$$ 
The homomorphism
\begin{equation}
A=\left( \begin{array}{cc}
a & b \\ 
c & d
\end{array}  \right) \in \SL_2(\RR) \longmapsto  \{A\}=\left(\begin{array}{cccc}
a & 0 & b & 0 \\ 
0 & 1 & 0 & 0 \\ 
c & 0 & d & 0 \\ 
0 & 0 & 0 & 1
\end{array}  \right)\in \Gamma^J(\RR)
\end{equation}
defines an embedding of $\SL_2(\RR)$ into $\Gamma^J(\RR)$. 
The second standard subgroup of $\Gamma^J(\RR)$ is the real Heisenberg group  
$$ H(\RR):=\{ [x,y;r]: x,y, r \in \RR\}$$
where
\begin{equation}\label{Heis}
[x,y;r]=\left(\begin{array}{cccc}
1 & 0 & 0 & y \\ 
x & 1 & y & r \\ 
0 & 0 & 1 & -x \\ 
0 & 0 & 0 & 1
\end{array}  \right).
\end{equation}
The multiplication in $H(\RR)$ is given by 
\begin{equation}
[x_1,y_1;r_1] \cdot [x_2,y_2;r_2]= [x_1+x_2,y_1+y_2;r_1+r_2+x_1y_2-x_2y_1] .
\end{equation}
The group $ \SL_2(\RR)$ acts on $H(\RR)$ via
\begin{equation}
A\cdot [x,y;r]=\{A\}[x,y;r]\{A^{-1}\}=[(x,y)A^{-1};r]= [dx-cy,ay-bx;r].
\end{equation}
The real Jacobi group $\Gamma^J(\RR)$ is the semidirect product $\SL_2(\RR)\ltimes H(\RR)$ with the multiplication 
\begin{equation}
(A,h_1)\cdot (B,h_2)=(AB, (B^{-1}\cdot h_1)\cdot h_2).
\end{equation}
Let $\Gamma^J(\ZZ)=\SL_2(\ZZ)\ltimes H(\ZZ)$ be the integral Jacobi group, where 
$$ H(\ZZ):=\{ [x,y;r]: x,y, r \in \ZZ\}$$
is the integral Heisenberg group. 
We describe the characters of the integral Jacobi group. Let $\chi : \Gamma^J(\ZZ) \rightarrow \CC^\times$ be a character (or a multiplier system) of finite order. By \cite{CG13}, its restriction to $\SL_2(\ZZ)$ is $\upsilon_{\eta}^D$, where $D\in \NN$ and $\upsilon_{\eta}$ is the multiplier system (or a projective character) of order $24$ of the Dedekind $\eta$-function
$$
\eta(\tau)=q^{\frac{1}{24}}\prod_{n=1}^\infty(1-q^n).
$$
Moreover, one has
\begin{equation}
\chi ( \{A\} \cdot [x,y;r])= \upsilon_{\eta}^D (A) \cdot \upsilon_{H}^\varepsilon([x,y;r]),
\end{equation}
where $\varepsilon=0,1$ and $\upsilon_{H}$ is the unique binary character of $H(\ZZ)$ defined as
\begin{equation}
\upsilon_{H}([x,y;r])=(-1)^{x+y+xy+r}.
\end{equation}

We now introduce the action of the Jacobi group on the space of functions on $\HH\times \CC$. Fix
$$
Z=\left(\begin{array}{cc}
\tau & z \\ 
z & \omega
\end{array}  \right) \in \HH_2, \quad q=e^{2\pi i\tau}, \quad \zeta=e^{2\pi i z}.
$$
Let $k\in \frac{1}{2}\ZZ$ and $t\in \frac{1}{2}\NN$. Assume that 
$\varphi(\tau,z)$ is a holomorphic function on $\HH\times \CC$. Then 
\begin{equation}
\widetilde{\varphi}(Z):= \varphi(\tau,z)\exp(2\pi itw)
\end{equation}
is a holomorphic function on $\HH_2$. The action of the Jacobi group $\Gamma^J(\RR)$ on $\varphi(\tau,z)$ is defined as
\begin{equation}
(\varphi\lvert_{k,t}g)(\tau,z):=(\widetilde{\varphi}\lvert_k g)(Z)\exp(-2\pi i t \omega).
\end{equation}
In particular, the generators of $\Gamma^J(\RR)$ act on $\varphi(\tau,z)$ via
\begin{align}
\big(\varphi\lvert_{k,t}\{A\}\big)(\tau,z)&=(c\tau + d)^{-k} e^{-2\pi i t \tfrac{cz^2}{c \tau + d}}\varphi \left( \frac{a\tau +b}{c\tau + d},\frac{z}{c\tau + d} \right),\\
\big(\varphi\lvert_{k,t}[x,y;r]\big)(\tau,z)&= e^{2\pi i t ( x^2\tau +2xz+xy+r)}  \varphi (\tau, z+ x \tau + y).
\end{align}

\begin{definition}\label{def:JF}
Let $k\in \frac{1}{2}\ZZ$ and $t\in \frac{1}{2}\NN$. A holomorphic function $\varphi(\tau,z)$ on $\HH\times \CC$ is called a {\it weak Jacobi form of weight $k$ and index $t$} with a multiplier system (or a character) $\chi: \Gamma^J(\ZZ) \rightarrow \CC^\times$ if it satisfies the functional equation 
$$ (\varphi\lvert_{k,t}g)(\tau,z)= \chi(g)\varphi(\tau,z), \quad \forall g \in \Gamma^J(\ZZ),$$
and if it has a Fourier expansion of the form
$$ \varphi(\tau,z)=\sum_{\substack{ n\geq 0,\, n\equiv \frac{D}{24} \m \ZZ\\ l\in \frac{1}{2}\ZZ}} f(n,l)\exp\big(2\pi i(n\tau+lz)\big),$$
where $0\leq D<24$ is given by $\chi\lvert_{\SL_2(\ZZ)} = \upsilon_{\eta}^D$.
If $\varphi$ further satisfies the condition
$$ f(n,l) \neq 0 \Longrightarrow 4nt - l^2 \geq 0 $$
then it is called a {\it holomorphic Jacobi form}. If $\varphi$ further satisfies the stronger condition
$$ f(n,l) \neq 0 \Longrightarrow 4nt - l^2 > 0 $$
then it is called a {\it Jacobi cusp form}.
\end{definition}

We denote by $J^{\w}_{k,t}(\chi)$ the finite dimensional vector space of all weak Jacobi forms of weight $k$, index $t$ and character $\chi$. The corresponding spaces of holomorphic Jacobi forms and Jacobi cusp forms are denoted by $J_{k,t}(\chi)$ and $J^{\mathrm{cusp}}_{k,t}(\chi)$, respectively. If the character is trivial, we also write $J_{k,t}=J_{k,t}(1)$ for short.

We remark that if $\varphi\in J_{k,t}^{\w}(\chi)$ is nonzero then $\chi=\upsilon_\eta^D\cdot \upsilon_H^{2t}$. By definition, we have 
$$
J^{\w}_{k,t}(\upsilon_{\eta}^D\cdot\upsilon_{H}^{2t})=\eta^D\cdot J^{\w}_{k-\frac{D}{2},t}(\upsilon_{H}^{2t}),
\quad\text{and}\quad k-D/2 \in \ZZ. 
$$

\subsection{Jacobi theta functions}
Let $(a,b)\in (\QQ^2\backslash \ZZ^2)\cup \{(0,0)\}$.  The Jacobi theta function with rational characteristic $(a,b)$ (see \cite[Chapter 1]{Mu}) is defined as   
\begin{equation}\label{thetaab}
\vartheta_{a,b}(\tau,z)
=\sum_{n\in \ZZ}e^{\pi i(n+a)^2\tau+2\pi i(n+a)(z+b)}.
\end{equation}
It is a holomorphic function on $\HH\times \CC$. We set
$$
\theta_{a,b}(\tau)=\vartheta_{a,b}(\tau,0).
$$
In the notation, the classical Jacobi theta functions of order $2$ have the following form  
\begin{align*}
&\vartheta_{00}=\vartheta_{0,0}& &\vartheta_{01}=\vartheta_{0, \frac{1}2}& &\vartheta_{10}=\vartheta_{\frac 1{2}, 0}& &\vartheta_{11}=\vartheta_{\frac 1{2}, \frac{1}2}&\\
&\theta_{00}=\theta_{0,0}& &\theta_{01}=\theta_{0, \frac{1}2}& &\theta_{10}=\theta_{\frac 1{2}, 0}&
\end{align*}
For $ab=00$, $01$, $10$ we also put 
$$
\xi_{ab}(\tau,z)=\vartheta_{ab}(\tau,z)/\theta_{ab}(\tau).
$$

Let $q=e^{2\pi i\tau}$ and $\zeta=e^{2\pi iz}$. The Jacobi triple product formula
\begin{equation}
\vartheta(\tau,z) = q^{\frac{1}{8}} (\zeta^{\frac{1}{2}}-\zeta^{-\frac{1}{2}}) \prod_{n\geq 1} (1-q^n\zeta)(1-q^n \zeta^{-1})(1-q^n) 
\end{equation}
defines a holomorphic Jacobi form of  weight $\frac{1}{2}$ and index $\frac{1}{2}$
with a multiplier system of order $8$, that is, $\vartheta\in J_{\frac{1}{2},\frac{1}{2}}(\upsilon_{\eta}^3 \cdot \upsilon_{H})$ (see \cite{GN}). Note that $\vartheta(\tau,z) =-i\vartheta_{11}(\tau,z)$ and it vanishes precisely on 
$$
\{ (\tau,z)\in \HH\times \CC : z\in \ZZ\tau+\ZZ \}.
$$
Moreover, these zeros are all simple.

The Jacobi theta functions with rational characteristics can be expressed by means of $\vartheta$.
\begin{align}
\vartheta(\tau,z+a\tau+b)=&\exp\Big(-\pi i \big(a^2\tau+2az+2ab+a+\frac{1}{2}\big)\Big)\vartheta_{a+\frac{1}{2},b+\frac{1}{2}}(\tau,z),\\
\label{eq:orbit-theta}\Big(\vartheta\lvert_{\frac{1}{2},\frac{1}{2}} [a,b;0]\Big)(\tau,z)=&\exp\Big(-\pi i \big(ab+a+\frac{1}{2}\big)\Big)
\vartheta_{a+\frac{1}{2},b+\frac{1}{2}}(\tau,z),\\
\vartheta_{a+x,b+y}(\tau,z)=&\exp\bigl(2\pi i ay\bigr)\vartheta_{a,b}(\tau,z), 
\quad \forall \, x,y \in\ZZ.
\end{align}

\subsection{The space of Jacobi forms}
For convenience, we introduce some notation of modular forms and Jacobi forms. The function $E_{2k}=1+O(q)$ denotes the $\SL_2(\ZZ)$-Eisenstein series of weight $2k$ for $k\geq 1$. The function $\Delta=\eta^{24}$ is the cusp form of weight 12 on $\SL_2(\ZZ)$. The function $E_{2k,m}$ denotes the Jacobi--Eisenstein series of weight $2k$ and index $m$ for $k,m \geq 1$ introduced in \cite[\S 2]{EZ}. The functions $\phi_{0,1}$, $\phi_{0,2}$, $\phi_{0,3}$, $\phi_{0,4}$ are generators of the graded ring of weak Jacobi forms of weight $0$ with integral Fourier coefficients and their indices are respectively $1$, $2$, $3$, $4$ (see \cite{G1, G2, GW2}). The function $\phi_{-2,1}$ is the unique weak Jacobi form of weight $-2$ and index $1$ up to constant (see \cite{EZ}). The following constructions can be found in \cite{EZ, G2, GW2}.
\begin{align}
\phi_{0,1}(\tau,z)&=\frac{(E_4^2E_{4,1}-E_6E_{6,1})(\tau,z)}{144\Delta(\tau)}=\zeta+\zeta^{-1}+10+O(q)\in J_{0,1}^{\w},\\
\phi_{0,2}(\tau,z)&= 2\left[(\xi_{00}\xi_{01})^2+(\xi_{00}\xi_{10})^2+(\xi_{10}\xi_{01})^2\right](\tau,z)=\zeta+\zeta^{-1}+4+O(q)\in J_{0,2}^{\w},\\
\phi_{0,3}(\tau,z)&=\frac{\vartheta^2(\tau,2z)}{\vartheta^2(\tau,z)}=\zeta+\zeta^{-1}+2+O(q)\in J_{0,3}^{\w},\\
\phi_{0,4}(\tau,z)&= \frac{\vartheta(\tau,3z)}{\vartheta(\tau,z)}=\zeta+\zeta^{-1}+1+O(q)\in J_{0,4}^{\w},\\
\phi_{-2,1}(\tau,z)&= \frac{\vartheta^2(\tau,z)}{\eta^6(\tau)}=\zeta+\zeta^{-1}-2+O(q)\in J_{-2,1}^{\w}.
\end{align}

For ease of use later, we recall the structure of the space of Jacobi forms. From \cite[Theorem 9.4]{EZ}, we know that the bigraded ring of all weak Jacobi forms of even weight and integral index is a polynomial algebra in $\phi_{0,1}$ and $\phi_{-2,1}$ over the ring of $\SL_2(\ZZ)$ modular forms. Moreover, for $k\in \ZZ$ and $t\in \NN$, we have
\begin{align}
&J_{2k+1,t}^\w=\phi_{-1,2}\cdot J_{2k+2,t-2}^\w,& &\phi_{-1,2}(\tau,z)=\frac{\vartheta(\tau,2z)}{\eta^3(\tau)}\in J_{-1,2}^\w.&
\end{align}
Let $k\in\ZZ$ and $t\in \NN$.  By \cite[Lemma 1.4]{G1}, we have
\begin{align}
&J_{2k,t+\frac{1}{2}}^\w=\phi_{0,\frac{3}{2}}\cdot J_{2k,t-1}^\w,& &J_{2k+1,t+\frac{1}{2}}^\w=\phi_{-1,\frac{1}{2}}\cdot J_{2k+2,t}^\w,&
\end{align}
where $\phi_{0,\frac{3}{2}}$ and $\phi_{-1,\frac{1}{2}}$ are defined as
\begin{align}
&\phi_{0,\frac{3}{2}}(\tau,z)=\frac{\vartheta(\tau,2z)}{\vartheta(\tau,z)}\in J_{0,\frac{3}{2}}^\w,& &\phi_{-1,\frac{1}{2}}(\tau,z)=\frac{\vartheta(\tau,z)}{\eta^3(\tau)}\in J_{-1,\frac{1}{2}}^\w.&
\end{align}

\section{Orbits of Jacobi forms}\label{sec:3}
In this section we introduce the notion of orbits of Jacobi forms.  Let us fix $X=(p,q)\in \QQ^2$. We consider the set
\begin{equation}
X \cdot \SL_2(\ZZ)=\{X M: M\in \SL_2(\ZZ) \},
\end{equation}
where $X M$ is the usual matrix multiplication. 
We choose a finite system of representatives 
\begin{equation}
R_X=X\cdot  \SL_2(\ZZ)/\ZZ^2.
\end{equation}
For arbitrary $Y_1\in R_X$ and $M\in \SL_2(\ZZ)$, we can choose $Y_2\in R_X$ such that $Y_1 M-Y_2\in \ZZ^2$. We compute in the real Heisenberg group \eqref{Heis}
\begin{align*}
[Y_1;0]\cdot \{M\}&=\{M\}\cdot(\{M^{-1}\}[Y_1;0]\{M\})\\
&=\{M\}\cdot [Y_1M;0]\\ 
&=\{M\}\cdot[Y_1M-Y_2;0][0,0;\det(Y_2;Y_1M)][Y_2;0],
\end{align*}
where $\det(Y_2;Y_1M)$ is the determinant of the matrix whose first row is $Y_2$ and second row is $Y_1M$.

Let $\varphi \in J_{k,t}(\upsilon_{\eta}^D \cdot \upsilon^{2t}_{H})$. From discussions above, we derive the transformation formula under the action of $\SL_2(\ZZ)$ as follows
\begin{equation}\label{orbit1}
\big(\varphi\lvert_{k,t}[Y_1;0]\big)\lvert_{k,t}\{M\}=\upsilon_{\eta}^D(M)\upsilon^{2t}_{H}\big([Y_1M-Y_2;0]\big)\exp\big(2\pi it \det(Y_2;Y_1M)\big)\varphi\lvert_{k,t}[Y_2;0].
\end{equation}

Let $Y=(y_1,y_2)\in R_X$. For any $h=[h_1,h_2;0]\in H(\ZZ)$, we have 
$$
[Y;0]\cdot h=h\cdot[0,0;2(y_1h_2-h_1y_2)]\cdot[Y;0],
$$
which yields the transformation formula under the action of the integral Heisenberg group
\begin{equation}\label{orbit2}
\big(\varphi\lvert_{k,t}[Y;0]\big)\lvert_{k,t}h=\upsilon^{2t}_{H}(h)\exp\big(4\pi i t(y_1h_2-h_1y_2)\big) \varphi\lvert_{k,t}[Y;0].
\end{equation}

Let $f(n,l)$ denote the Fourier coefficients of $\varphi$. The order of $\varphi$ is defined as
\begin{equation}
\Ord(\varphi)=\min\{4nt-l^2: f(n,l)\neq 0\}.
\end{equation}
Then 
\begin{equation}\label{ord}
 \varphi\lvert_{k,t}[Y;0]\equiv 0 \quad \text{or}\quad   \Ord(\varphi\lvert_{k,t}[Y;0])\geq \Ord(\varphi).
\end{equation}
The following two formulas will be used later. Their proofs are straightforward.
\begin{align}
\label{orbit3}\big[\big(\varphi\lvert_{k,t}[Y;0]\big)(\tau,mz)\big]\lvert_{k,m^2t}\{M\}&=\big(\varphi\lvert_{k,t}[Y;0]\lvert_{k,t}\{M\}\big)(\tau,mz),\\
\label{orbit4}\big[\big(\varphi\lvert_{k,t}[Y;0]\big)(\tau,mz)\big]\lvert_{k,m^2t}h&=\big(\varphi\lvert_{k,t}[Y;0]\lvert_{k,t}h'\big)(\tau,mz),
\end{align} 
where $m$ is a positive integer and $h=[h_1,h_2;r]\in H(\ZZ)$, 
$h'=[mh_1,mh_2;m^2r]$.
 
From now on, we abbreviate $\varphi\lvert_{k,t}[Y;0]$ to $\varphi\lvert Y$ 
when there is no confusion.  Equations \eqref{orbit1} and 
\eqref{orbit2} describe the transformations of the vector-valued function 
$\left( \varphi\lvert Y \right)_{Y\in R_X}$ with respect to the integral Jacobi 
group. We call this vector-valued function the \textit{orbit} of 
$\varphi$. We note that the transformation formulae depend on the choice 
of the system representatives $R_X$. This paper aims to construct new 
Jacobi forms by orbits of basic Jacobi forms. 
We consider the special combinations of two types of functions: 
$\left(\varphi \lvert Y \right) (\tau,0)$ and 
$\left(\varphi\lvert Y \right)(\tau,mz)$, $m\geq 1$. Here, the adjective 
\textit{special} means that by this combination one can cancel the bad 
additional factors coming from 
$\upsilon^{2t}_{H}([Y_1M-Y_2;0])\exp\bigl(2\pi it \det(Y_2;Y_1M)\bigr)$ 
and 
$\exp\bigl(4\pi i t(y_1h_2-h_1y_2)\bigr)$ in (\ref{orbit1}) 
and (\ref{orbit2}).

\section{Theta relations and their generalizations}\label{sec:4}
Let us consider $X=(\frac{1}{N},\frac{1}{N})$, where $N\geq 2$ is a positive integer. Then we fix
$$
R_N=\left( \frac{1}{N}, \frac{1}{N} \right) \cdot \SL_2(\ZZ)/ \ZZ^2=\left\lbrace \left(\frac{u}{N},\frac{v}{N}\right)\in\frac{1}{N}\ZZ^2 : 0\leq u,v <N, \, (u,v)\neq (0,0) \right\rbrace.
$$
In this section, we use the orbit $(\vartheta\lvert Y)_{Y\in R_N}$ of $\vartheta$ to construct many Jacobi forms. When the constructed Jacobi form is identically zero, we get an identity on Jacobi theta functions with rational characteristics, which is in fact a theta relation. When the constructed Jacobi form is not zero, we obtain a generalized Riemann's theta relation. For instance, we get formulae that represent basic Jacobi forms (Jacobi--Eisenstein series, generators of the ring of Jacobi forms, ...) in terms of Jacobi theta functions with rational characteristics.

We now introduce our first result. The product $\prod_{Y\in R_N}\vartheta\lvert Y$ of the functions in the orbit of $\vartheta$  transforms like a Jacobi form. More precisely, we prove the following proposition.

\begin{proposition}\label{prop01}
Let $N\geq 2$ be a positive integer. Then we have the following identity 
\begin{equation}\label{J01}
\prod_{\substack{0\leq u,v <N\\ (u,v)\neq (0,0)}}\vartheta_{\frac{u}{N}+\frac{1}{2},\frac{v}{N}+\frac{1}{2}}(\tau,z)=(-1)^{N-1}\eta^{N^2-1}(\tau)\frac{\vartheta(\tau,Nz)}{\vartheta(\tau,z)}.
\end{equation}
In particular, let $z=0$, we get
\begin{equation}\label{J02}
\prod_{\substack{0\leq u,v <N\\ (u,v)\neq (0,0)}}\theta_{\frac{u}{N}+\frac{1}{2},\frac{v}{N}+\frac{1}{2}}(\tau)=(-1)^{N-1} N \eta^{N^2-1}(\tau).
\end{equation}
\end{proposition}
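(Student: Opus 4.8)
The plan is to prove~\eqref{J01} by showing that both sides are weak Jacobi forms of the same weight, index and character, and that their ratio—being holomorphic, nowhere vanishing, and bounded—is constant; then the constant is pinned down by comparing leading Fourier coefficients. First I would identify the data on both sides. The left-hand product runs over $N^2-1$ translates of $\vartheta$, each of weight $\tfrac12$ and index $\tfrac12$, so the product has weight $\tfrac{N^2-1}{2}$ and index $\tfrac{N^2-1}{2}$; on the right, $\eta^{N^2-1}$ has weight $\tfrac{N^2-1}{2}$ and the ratio $\vartheta(\tau,Nz)/\vartheta(\tau,z)=\phi_{0,N^2}/\phi_{0,1}$-type quotient is a weak Jacobi form of weight $0$ and index $\tfrac{N^2-1}{2}$ (indeed $\vartheta(\tau,Nz)$ has index $\tfrac{N^2}{2}$ and $\vartheta(\tau,z)^{-1}$ contributes $-\tfrac12$), so the right side matches. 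The key structural input is that the orbit $(\vartheta\lvert Y)_{Y\in R_N}$ is permuted up to scalars by $\SL_2(\ZZ)$ via~\eqref{orbit1} and by $H(\ZZ)$ via~\eqref{orbit2}; hence the full product $\prod_{Y\in R_N}\vartheta\lvert Y$ transforms under $\Gamma^J(\ZZ)$ by a character, because the product of all the scalar factors over a complete set of representatives is a root of unity independent of $\tau,z$. Using~\eqref{eq:orbit-theta}, each $\vartheta\lvert[Y;0]$ equals $\vartheta_{\frac uN+\frac12,\frac vN+\frac12}$ up to an explicit constant, so the left side of~\eqref{J01} is this product up to a fixed constant, and is therefore a weak Jacobi form of weight $\tfrac{N^2-1}{2}$, index $\tfrac{N^2-1}{2}$, with some finite-order character.

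Next I would compare zeros and poles. Since $\vartheta(\tau,z)$ vanishes exactly (and simply) on $z\in\ZZ\tau+\ZZ$, the quotient $\vartheta(\tau,Nz)/\vartheta(\tau,z)$ on the right has zeros where $Nz\in\ZZ\tau+\ZZ$ but $z\notin\ZZ\tau+\ZZ$, i.e.\ on the nonzero $N$-torsion points $z=\frac uN\tau+\frac vN$ with $(u,v)\not\equiv(0,0)\bmod N$, and it is holomorphic and nonvanishing elsewhere. On the left, $\vartheta_{\frac uN+\frac12,\frac vN+\frac12}(\tau,z)=\pm e^{(\dots)}\vartheta(\tau,z+\frac uN\tau+\frac vN)$ vanishes precisely when $z+\frac uN\tau+\frac vN\in\ZZ\tau+\ZZ$, that is at $z\equiv(-\frac uN)\tau+(-\frac vN)\bmod(\ZZ\tau+\ZZ)$; as $(u,v)$ ranges over the nonzero classes mod $N$, so does $(-u,-v)$, and each factor is simple, so the left side has exactly the same divisor (with multiplicity one at each nonzero $N$-torsion point) as $\eta^{N^2-1}\cdot\vartheta(\tau,Nz)/\vartheta(\tau,z)$, noting $\eta$ never vanishes on $\HH$. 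Therefore the ratio of the two sides is a weak Jacobi form of weight $0$ and index $0$ with no zeros and no poles on $\HH\times\CC$, hence a holomorphic function of $\tau$ alone that is a modular form of weight $0$ with a character, and being bounded (nonvanishing weak Jacobi form of index $0$) it is a constant.

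Finally I would evaluate the constant. The cleanest route is to specialize $z=0$: the right side becomes $(-1)^{N-1}\eta^{N^2-1}(\tau)\lim_{z\to0}\vartheta(\tau,Nz)/\vartheta(\tau,z)$, and since $\vartheta(\tau,z)=q^{1/8}(2\pi i z)\prod_{n\geq1}(1-q^n)^3+O(z^2)$ near $z=0$ (from the triple product, as $\zeta^{1/2}-\zeta^{-1/2}=2\pi i z+O(z^3)$), this limit equals $N$. On the left side at $z=0$, one factor—the one with $(u,v)\equiv(0,0)$ among the shifted characteristics, i.e.\ where $\vartheta_{\frac uN+\frac12,\frac vN+\frac12}$ specializes to a value proportional to the derivative of $\vartheta$—needs the same $z\to0$ expansion, while the remaining $N^2-1$ factors contribute $\prod\theta_{\frac uN+\frac12,\frac vN+\frac12}(\tau)$; carefully, identity~\eqref{J01} at $z=0$ is exactly~\eqref{J02}, so proving~\eqref{J01} and then setting $z=0$ and using $\lim_{z\to0}\vartheta(\tau,Nz)/\vartheta(\tau,z)=N$ yields~\eqref{J02} immediately. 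Thus it suffices to determine the constant in~\eqref{J01}; I would do this by matching the lowest-order term in the $q$-expansion of both sides along, say, $z=\tfrac12$ or by tracking the leading power of $q$ and the leading $\zeta$-coefficient, where the sign $(-1)^{N-1}$ emerges from collecting the explicit roots of unity in~\eqref{eq:orbit-theta} over all $(u,v)$. The main obstacle I anticipate is the bookkeeping of these constant/character factors: one must verify that $\prod_{Y\in R_N}$ of the scalar multipliers in~\eqref{orbit1}–\eqref{orbit2} is genuinely trivial (so the product is a form for $\Gamma^J(\ZZ)$, not merely for a subgroup) and then correctly assemble the accumulated phases from~\eqref{eq:orbit-theta} into the clean sign $(-1)^{N-1}$ and factor $N$; everything else is a standard ``equal divisors $\Rightarrow$ constant ratio'' argument.
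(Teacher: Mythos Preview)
Your overall strategy---match divisors, conclude the ratio is a weak Jacobi form of weight~$0$ and index~$0$, hence constant, then pin down the constant---is the same as the paper's. The one substantive difference is how modularity of the left-hand product is established. You propose to verify directly that $\prod_{Y\in R_N}\vartheta\lvert Y$ transforms under $\Gamma^J(\ZZ)$ by a finite-order character, and you correctly flag the bookkeeping of the accumulated phases from~\eqref{orbit1}--\eqref{orbit2} as the main obstacle. The paper sidesteps that computation entirely: it passes to the $2N$-th power $\prod_{Y\in R_N}(\vartheta\lvert Y)^{2N}$, for which the extra factors $\upsilon_H([Y_1M-Y_2;0])\exp(\pi i\det(Y_2;Y_1M))$ and $\exp(2\pi i(y_1h_2-h_1y_2))$ are raised to an exponent divisible by $2N$ and hence become~$1$ (since $N\det(Y_2;Y_1M)\in\ZZ$ and $N(y_1h_2-h_1y_2)\in\ZZ$). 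This immediately places $\Phi=\prod(\vartheta\lvert Y)^{2N}$ in $J_{N^3-N,\,N^3-N}(\upsilon_\eta^{2(N^3-N)})$ with no character chase; after the divisor comparison gives $(\text{LHS}/\text{RHS})^{2N}=\text{const}$, connectedness of $\HH\times\CC$ forces $\text{LHS}/\text{RHS}$ itself to be constant.

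Your route is valid but trades the paper's single algebraic trick for a longer verification; if you pursue it, note that the ratio will a priori carry some character $\upsilon_\eta^D$, and you should argue (e.g.\ by raising to a power killing the character, or by checking the leading $q$-exponent matches on both sides) that it lands in $J_{0,0}^{\w}=\CC$ rather than merely in $M_0(\SL_2(\ZZ),\upsilon_\eta^D)$. For the constant, the paper invokes \cite{GD} for the leading $q$-term $(-1)^{N-1}Nq^{(N^2-1)/24}$ of the product at $z=0$; your plan to extract it by hand from~\eqref{eq:orbit-theta} and the $q$-expansions is feasible but is exactly the computation that reference packages.
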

\begin{proof}
Recall from \eqref{eq:orbit-theta} that $\vartheta_{\frac{u}{N}+\frac{1}{2},\frac{v}{N}+\frac{1}{2}}$ equals $\vartheta|Y$ up to constant factor for $Y=(u/N,v/N)$. 
Although we can compute the transformations of $\prod_{Y\in R_N}\vartheta\lvert Y$ under the action of the integral Jacobi group, the computations are very complicated. To pass through this difficulty, we consider the function $\prod_{Y\in R_N}\vartheta^{2N} \lvert Y$ because the additional bad factors in (\ref{orbit1}) and (\ref{orbit2}) will be cancelled.
For any $M\in \SL_2(\ZZ)$ and any $h\in \ZZ^2$, we have 
\begin{align*}
(\vartheta^{2N}\lvert Y) \lvert M&=\upsilon_{\eta}^{6N}(M)\vartheta^{2N}\lvert Y',\\
(\vartheta^{2N}\lvert Y) \lvert [h;0]&=\vartheta^{2N}\lvert Y,
\end{align*}
where $Y'\in R_N$ such that $Y'-YM \in \ZZ^2$. Therefore,
$$
\Phi(\tau,z) =  \prod_{\substack{0\leq u,v <N\\ (u,v)\neq (0,0)}}\vartheta_{\frac{u}{N}+\frac{1}{2},\frac{v}{N}+\frac{1}{2}}^{2N}(\tau,z)=c\prod_{Y\in R_N}\vartheta^{2N}\lvert Y \in J_{N^3-N,N^3-N}(\upsilon_{\eta}^{2(N^3-N)}),
$$
where $c$ is a constant.
We know from \cite[Lemma 3]{GD} that $\prod_{\substack{0\leq u,v <N\\ (u,v)\neq (0,0)}}\vartheta_{\frac{u}{N}+\frac{1}{2},\frac{v}{N}+\frac{1}{2}}(\tau,0)$ has the lead term of $q$-expansion: $(-1)^{N-1}Nq^{\frac{N^2-1}{24}}$. It follows that $\frac{\Phi}{\eta^{2(N^3-N)}}\in J_{0,N^3-N}^\w$. Moreover, $\frac{\Phi}{\eta^{2(N^3-N)}}$ has the same zeros as $\left(\dfrac{\vartheta(\tau,Nz)}{\vartheta(\tau,z)} \right)^{2N}$ and their zeros have the same multiplicity. We hence obtain 
$$ \Big(\Phi / \eta^{2(N^3-N)}\Big) / \left(\frac{\vartheta(\tau,Nz)}{\vartheta(\tau,z)} \right)^{2N} \in J_{0,0}^\w.$$ 
Therefore, the above function is constant, which clearly forces that the function
$$
\left(\Big(\prod\vartheta_{\frac{u}{N}+\frac{1}{2},\frac{v}{N}+\frac{1}{2}}(\tau,z)\Big) / \eta^{N^2-1}(\tau) \right) / \left(\frac{\vartheta(\tau,Nz)}{\vartheta(\tau,z)}\right)
$$
is also constant. We then prove the desired formula by comparing the lead terms of $q$-expansion.
\end{proof}

\begin{corollary} By taking $N=2,3$, we deduce the following identities:
\begin{align}
&\label{ap01} \vartheta_{01}\vartheta_{10}\vartheta_{00}=\eta^3 \frac{\vartheta(\tau,2z)}{\vartheta(\tau,z)},\\
& \label{c1}\theta_{01}\theta_{10}\theta_{00}=2\eta^3,\\
& \vartheta_{\frac{1}{6},\frac{1}{6}}\vartheta_{\frac{1}{6},\frac{1}{2}}\vartheta_{\frac{1}{6},\frac{5}{6}}\vartheta_{\frac{1}{2},\frac{1}{6}}\vartheta_{\frac{1}{2},\frac{5}{6}}\vartheta_{\frac{5}{6},\frac{1}{6}}\vartheta_{\frac{5}{6},\frac{1}{2}}\vartheta_{\frac{5}{6},\frac{5}{6}}=-\eta^8 \frac{\vartheta(\tau,3z)}{\vartheta(\tau,z)},\\
& \theta_{\frac{1}{6},\frac{1}{6}}\theta_{\frac{1}{6},\frac{1}{2}}\theta_{\frac{1}{6},\frac{5}{6}}\theta_{\frac{1}{2},\frac{1}{6}}\theta_{\frac{1}{2},\frac{5}{6}}\theta_{\frac{5}{6},\frac{1}{6}}\theta_{\frac{5}{6},\frac{1}{2}}\theta_{\frac{5}{6},\frac{5}{6}}=-3 \eta^8.
\end{align}
\end{corollary}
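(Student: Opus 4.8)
The plan is to read off both cases $N=2$ and $N=3$ directly from Proposition \ref{prop01}, specializing \eqref{J01} and \eqref{J02}, and then to rewrite the theta-constants appearing on the left-hand side whose characteristics fall outside the fundamental range $[0,1)^2$ in terms of the normalized theta functions listed in the corollary. The only tool needed beyond Proposition \ref{prop01} is the quasi-periodicity relation $\vartheta_{a+x,b+y}(\tau,z)=\exp(2\pi i ay)\,\vartheta_{a,b}(\tau,z)$ for $x,y\in\ZZ$, used to absorb the integer shifts and to keep track of the resulting roots of unity. Note in passing that the excluded characteristic $(\tfrac12,\tfrac12)$, at which the corresponding theta-constant vanishes, never occurs: it would force $(u,v)=(0,0)$, which is not an element of $R_N$, so no factor in \eqref{J02} degenerates.

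For $N=2$ the three indices $(u,v)\in\{(0,1),(1,0),(1,1)\}$ give raw characteristics $(\tfrac12,1)$, $(1,\tfrac12)$, $(1,1)$, and the quasi-periodicity relation yields $\vartheta_{\frac12,1}=-\vartheta_{10}$, $\vartheta_{1,\frac12}=\vartheta_{01}$, $\vartheta_{1,1}=\vartheta_{00}$. Hence the left-hand side of \eqref{J01} equals $-\vartheta_{01}\vartheta_{10}\vartheta_{00}$, whereas its right-hand side is $-\eta^3\,\vartheta(\tau,2z)/\vartheta(\tau,z)$; cancelling the common sign gives \eqref{ap01}, and setting $z=0$ and invoking \eqref{J02} (whose right-hand side for $N=2$ is $-2\eta^3$) gives \eqref{c1} in the same manner.

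For $N=3$ the same scheme applies to the eight indices $(u,v)\in\{0,1,2\}^2\setminus\{(0,0)\}$, whose raw characteristics $(\tfrac{u}{3}+\tfrac12,\tfrac{v}{3}+\tfrac12)$ have entries in $\{\tfrac12,\tfrac56,\tfrac76\}$. Reducing each entry equal to $\tfrac76$ down to $\tfrac16$ via the quasi-periodicity relation produces the phase $-1$ from $\vartheta_{\frac12,\frac76}$ and the mutually conjugate phases $e^{-\pi i/3}$ and $e^{\pi i/3}$ from $\vartheta_{\frac56,\frac76}$ and $\vartheta_{\frac76,\frac76}$, all remaining reductions contributing $1$; since these phases multiply to $-1$, the left-hand side of \eqref{J01} becomes $-1$ times the product of the eight normalized theta functions in the corollary. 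Comparing with the right-hand side $\eta^8\,\vartheta(\tau,3z)/\vartheta(\tau,z)$, and then specializing to $z=0$ with \eqref{J02} (right-hand side $3\eta^8$ for $N=3$), yields the two remaining identities. I expect the only real obstacle to be the careful bookkeeping of the sixth roots of unity in this last case; there is no analytic content beyond Proposition \ref{prop01}.
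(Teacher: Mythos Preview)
Your proposal is correct and follows exactly the approach intended by the paper, which states the corollary as an immediate specialization of Proposition~\ref{prop01} without writing out a separate proof. Your careful bookkeeping of the quasi-periodicity phases (in particular the product $(-1)\cdot e^{-\pi i/3}\cdot e^{\pi i/3}=-1$ in the $N=3$ case) is precisely the computation the paper leaves implicit.
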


\begin{remark}
The formula (\ref{c1}) is classical (see \cite{Mu}). The formula (\ref{J02}) is crucial to the construction of elliptic units and was first proved in \cite{S} using Kronecker's second limit formula.  An explicit expression of (\ref{J02}) can also be found in \cite{GD}. The formula (\ref{J01}) is a generalization of (\ref{J02}) and seems to be new.
\end{remark}

We now present our main theorem. 

\begin{theorem}\label{MTH}
Let $N\geq 2$ and $a$, $b$, $c$, $d$ be non-negative integers satisfying $a+b+c+d \equiv 0 \m 2N$ and $N \lvert (b+2c)$. Then the function
\begin{equation}\label{Tr}
\Tr_{a,b,c,d}^{(N)}=\sum_{Y\in R_N} (\vartheta\lvert Y)^a(\tau,0)(\vartheta\lvert Y)^b(\tau,z)(\vartheta\lvert Y)^c(\tau,2z)(\vartheta\lvert Y)^d(\tau,Nz)
\end{equation}
is a holomorphic Jacobi form. More precisely, 
$$
\Tr_{a,b,c,d}^{(N)}\in J_{\frac{a+b+c+d}{2}, \frac{b+4c+N^2d}{2}}(\upsilon_{\eta}^{3(a+b+c+d)}\cdot \upsilon_{H}^{b+Nd}).
$$
\end{theorem}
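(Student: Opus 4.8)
The strategy is to verify directly that $\Tr_{a,b,c,d}^{(N)}$ satisfies the functional equation of a Jacobi form of the stated weight, index and character under the generators of $\Gamma^J(\ZZ)$, using the transformation formulae \eqref{orbit1}--\eqref{orbit4}, and then to check holomorphy at the cusp via the order estimate \eqref{ord}. First I would fix $Y\in R_N$ and compute the weight and index of each tensor factor: $(\vartheta\lvert Y)^a(\tau,0)$ is a modular form of weight $a/2$ and index $0$, while $(\vartheta\lvert Y)^b(\tau,z)$, $(\vartheta\lvert Y)^c(\tau,2z)$, $(\vartheta\lvert Y)^d(\tau,Nz)$ contribute weights $b/2$, $c/2$, $d/2$ and — using that substituting $mz$ for $z$ multiplies the index by $m^2$, as in \eqref{orbit3}--\eqref{orbit4} — indices $b/2$, $4c/2$, $N^2d/2$. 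Summing gives weight $\frac{a+b+c+d}{2}$ and index $\frac{b+4c+N^2d}{2}$, matching the claim. The $\SL_2(\ZZ)$-multiplier of $\vartheta$ is $\upsilon_\eta^3\upsilon_H$, so each summand carries the factor $\upsilon_\eta^{3(a+b+c+d)}$ on the $\SL_2(\ZZ)$ part, consistent with the asserted character.

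The substance is in controlling the ``bad'' correcting factors. Under $M\in\SL_2(\ZZ)$, formula \eqref{orbit1} applied to each copy of $\vartheta\lvert Y$ shows that $(\vartheta\lvert Y)^{a+b+c+d}$ (distributed over the four arguments $0,z,2z,Nz$ with the appropriate indices) transforms into the corresponding expression for $Y'$ with $Y'-YM\in\ZZ^2$, times the scalar
\[
\upsilon_\eta^{3(a+b+c+d)}(M)\cdot\upsilon_H^{a+b+c+d}\big([YM-Y';0]\big)\cdot\exp\!\Big(2\pi i\,\det(Y';YM)\cdot\tfrac{a+b+c+d}{2}\cdot\tfrac{\text{(index weights)}}{\cdots}\Big),
\]
and the key point is that the total index attached to the $\SL_2$-action is $\tfrac{b+4c+N^2d}{2}$, but the determinant factors coming from the four arguments combine (because $\det(Y';YM)$ scales by $m^2$ in the $mz$-slot by \eqref{orbit3}) into $\exp\big(\pi i(b+4c+N^2d)\det(Y';YM)\big)$, which — since $b+2c\equiv 0\pmod N$ forces $b+4c\equiv b+2c\cdot 2\equiv 2c$ type congruences and $N^2d$ is divisible by $N$, while $\det(Y';YM)\in\frac1{N^2}\ZZ$ — needs the hypothesis $N\mid(b+2c)$ to make the argument lie in $2\pi i\ZZ$ after also using $a+b+c+d\equiv 0\pmod{2N}$ for the $\upsilon_H$ piece. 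Summing over $Y\in R_N$ then just permutes the terms, so the sum is invariant up to the global character $\upsilon_\eta^{3(a+b+c+d)}\upsilon_H^{b+Nd}$; the exponent $b+Nd$ on $\upsilon_H$ is pinned down by tracking \eqref{orbit2}--\eqref{orbit4} under the Heisenberg generators $[h_1,h_2;r]$, where the factor $\exp(4\pi i t(y_1h_2-h_1y_2))$ for the $mz$-slots becomes $\exp(4\pi i t m(y_1h_2-h_1y_2))$ and, summed over the orbit with $y_i\in\frac1N\ZZ$, the fractional parts cancel precisely when $N\mid(b+2c)$.

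Having established the functional equation, holomorphy at infinity follows from \eqref{ord}: each $\vartheta\lvert Y$ has $\Ord\ge\Ord(\vartheta)\ge 0$ (indeed $\vartheta$ is a holomorphic Jacobi form, so $4\cdot\frac12\cdot n-l^2\ge 0$ on its support), and a finite sum of products of holomorphic Jacobi forms, evaluated at $z,2z,Nz$, is again a holomorphic Jacobi form — one checks that the Fourier support condition $4nt-l^2\ge 0$ is preserved under these operations and under summation. I would spell this out by noting $(\vartheta\lvert Y)(\tau,mz)$ lies in $J^{\w}_{1/2,m^2/2}$ with the same order bound, products add orders suitably, and $\Tr^{(N)}_{a,b,c,d}$ inherits $\Ord\ge 0$.

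**Main obstacle.** The genuine difficulty is the bookkeeping of the two families of correcting factors — the $\upsilon_H$-values $\upsilon_H([YM-Y';0])$ and the exponentials $\exp(2\pi i\cdot t_j\cdot\det(\cdot))$ and $\exp(4\pi i t_j(y_1h_2-h_1y_2))$ — across the four slots with their distinct indices $t_1=0$, $t_2=\tfrac12$, $t_3=2$, $t_4=\tfrac{N^2}2$, and showing that the precise combination $a+b+c+d\equiv 0\ (2N)$ together with $N\mid(b+2c)$ is exactly what forces every such factor to become a fixed global constant independent of $Y$ (so that summation over the orbit is clean) and equal to the claimed $\upsilon_\eta^{3(a+b+c+d)}\upsilon_H^{b+Nd}$. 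I expect this to reduce, after collecting exponents, to two elementary congruences mod $N$ and mod $2N$, but writing them out transparently — especially reconciling the $m^2$-scaling of indices with the linear $m$-scaling of the Heisenberg shift — is where care is required.
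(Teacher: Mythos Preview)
Your overall strategy coincides with the paper's: verify the transformation law of each summand $F_Y$ under $\SL_2(\ZZ)$ and under $H(\ZZ)$ via \eqref{orbit1}--\eqref{orbit4}, observe that the bad factors are independent of $Y$ under the stated congruences so that the sum over $R_N$ is a permutation, and then use \eqref{ord} for holomorphy. The Heisenberg step in your sketch is essentially correct and matches the paper: the combined exponential is $\exp\!\big(2\pi i(b+2c+Nd)(y_1h_2-h_1y_2)\big)$, and since $y_1h_2-h_1y_2\in\frac1N\ZZ$, it vanishes precisely when $N\mid(b+2c)$; the leftover character is $\upsilon_H^{b+Nd}$.

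However, your $\SL_2$ computation contains a genuine error. You claim the determinant factors ``scale by $m^2$ in the $mz$-slot'' and combine to $\exp\!\big(\pi i(b+4c+N^2d)\det(Y';YM)\big)$, invoking $N\mid(b+2c)$ to kill it. This misreads \eqref{orbit3}: that formula says the $\SL_2$-action on $(\vartheta\lvert Y)(\tau,mz)$ (at index $m^2/2$) is obtained by first applying $\lvert_{\frac12,\frac12}\{M\}$ to $\vartheta\lvert Y$ and \emph{then} substituting $mz$. Hence each of the $l:=a+b+c+d$ copies of $\vartheta$ contributes the \emph{same} correcting factor $\upsilon_\eta^3(M)\,\upsilon_H([Y_1M-Y_2;0])\,e^{\pi i\det(Y_2;Y_1M)}$, regardless of $m$. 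The total factor is therefore
\[
\upsilon_\eta^{3l}(M)\,\upsilon_H^{\,l}\big([Y_1M-Y_2;0]\big)\,e^{\pi i\, l\,\det(Y_2;Y_1M)}.
\]
Moreover $\det(Y_2;Y_1M)\in\frac1N\ZZ$ (not $\frac1{N^2}\ZZ$): since $Y_1M-Y_2\in\ZZ^2$, one has $\det(Y_2;Y_1M)=\det(Y_2;Y_1M-Y_2)$ with $Y_2\in\frac1N\ZZ^2$ and $Y_1M-Y_2\in\ZZ^2$. Now the hypothesis $2N\mid l$ alone kills both $\upsilon_H^{\,l}$ (since $l$ is even) and the exponential (since $l\cdot\frac1N\in 2\ZZ$), leaving only $\upsilon_\eta^{3l}(M)$. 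So the two congruences do not interact in the way you describe: $2N\mid(a+b+c+d)$ handles the $\SL_2$-step entirely, and $N\mid(b+2c)$ is used only for the Heisenberg step. Once you correct this, the bookkeeping becomes the two one-line computations in the paper's proof rather than the tangle you anticipate.
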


\begin{proof}
For convenience we set $F_Y=(\vartheta\lvert Y)^a(\tau,0)(\vartheta\lvert Y)^b(\tau,z)(\vartheta\lvert Y)^c(\tau,2z)(\vartheta\lvert Y)^d(\tau,Nz)$. From (\ref{orbit1}) and (\ref{orbit3}) we conclude that
\begin{align*}
F_{Y_1}\lvert M&=\upsilon_{\eta}^{3l}(M) \upsilon_{H}^{l}([Y_1M-Y_2;0])\cdot e^{\pi il \det(Y_2;Y_1M) }F_{Y_2}\\
&=\upsilon_{\eta}^{3l}(M)F_{Y_2}
\end{align*}
for any $M\in \SL_2(\ZZ)$, where $l=a+b+c+d$. The above equality holds because $2N \lvert l$ and $N\det(Y_2;Y_1M)\in \ZZ$. By (\ref{orbit2}) and (\ref{orbit4}), we find that
\begin{align*}
F_{Y}\lvert h&=\upsilon_{H}^{b+Nd}(h)e^{2\pi i (b+2c+Nd)(y_1h_2-h_1y_2)}F_{Y}\\
&=\upsilon_{H}^{b+Nd}(h)F_{Y}
\end{align*}
for any $h\in H(\ZZ)$, because $N(y_1h_2-h_1y_2)\in \ZZ$. We thus prove that $\Tr_{a,b,c,d}^{(N)}$ is a holomorphic Jacobi form by the explanations above and (\ref{ord}). 
\end{proof}

By selecting different parameters in (\ref{Tr}), we get a lot of identities. Firstly, our main theorem reproduces many known identities. 

\begin{corollary}\label{coro1}
\begin{align}
&\label{c00}\Tr^{(2)}_{4,0,0,0}:& &\theta_{01}^4+\theta_{10}^4-\theta_{00}^4=0,\\
&\label{c01} \Tr^{(2)}_{2,2,0,0}:&  &\theta_{01}^2\vartheta_{01}^2+\theta_{10}^2\vartheta_{10}^2-\theta_{00}^2\vartheta_{00}^2=0, \\
&\Tr^{(2)}_{0,4,0,0}:& &\vartheta_{01}^4+\vartheta_{10}^4-\vartheta_{00}^4=\vartheta^4,\\
&\Tr^{(2)}_{3,0,1,0}:& &\theta_{01}^3\vartheta_{01}(\tau,2z)+\theta_{10}^3\vartheta_{10}(\tau,2z)-\theta_{00}^3\vartheta_{00}(\tau,2z)=-2\vartheta(\tau,z)^4,\\
&\Tr^{(2)}_{1,2,1,0}:& &\theta_{01}\vartheta_{01}(\tau,2z)\vartheta_{01}(\tau,z)^2+\theta_{10}\vartheta_{10}(\tau,2z)\vartheta_{10}(\tau,z)^2-\theta_{00}\vartheta_{00}(\tau,2z)\vartheta_{00}(\tau,z)^2=0,\\
&\Tr^{(2)}_{8,0,0,0}:& &E_4=\frac{1}{2}\left(\theta_{01}^8+\theta_{10}^8+\theta_{00}^8\right)=  \theta_{00}^4\theta_{01}^4+\theta_{00}^4\theta_{10}^4-\theta_{10}^4\theta_{01}^4.
\end{align}
\end{corollary}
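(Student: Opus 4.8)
The plan is to treat the six identities by one common recipe: (1) rewrite $\Tr^{(2)}_{a,b,c,d}$ explicitly in terms of the classical theta functions; (2) apply Theorem~\ref{MTH} to place $\Tr^{(2)}_{a,b,c,d}$ inside a concrete finite-dimensional space of Jacobi forms and identify that space; (3) either conclude the space is $0$, or fix the remaining scalar by matching one Fourier coefficient.

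For step (1), unwind the orbit at $N=2$. Here $R_2=\{(\tfrac12,0),(0,\tfrac12),(\tfrac12,\tfrac12)\}$, and combining \eqref{eq:orbit-theta} with $\vartheta_{a+x,b+y}=e^{2\pi iay}\vartheta_{a,b}$ gives
$$\vartheta\lvert(\tfrac12,0)=-\vartheta_{01},\qquad \vartheta\lvert(0,\tfrac12)=i\,\vartheta_{10},\qquad \vartheta\lvert(\tfrac12,\tfrac12)=e^{-5\pi i/4}\,\vartheta_{00}$$
as functions of $(\tau,z)$ (one substitutes $z\mapsto 0$ or $z\mapsto 2z$ as needed). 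Writing $m=a+b+c+d$ and using $4\mid m$, the three constants to the power $m$ equal $1$, $1$ and $(-1)^{m/4}$, so $\Tr^{(2)}_{a,b,c,d}$ is precisely the sum of the $\vartheta_{01}$-, $\vartheta_{10}$- and $\vartheta_{00}$-monomials appearing on the right-hand sides, with the $\vartheta_{00}$-term carrying the sign $(-1)^{m/4}$; this reproduces the left-hand sides of \eqref{c00}, \eqref{c01} and the remaining four displays. This part is pure bookkeeping.

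For step (2), each quadruple satisfies $4\mid a+b+c+d$ and $2\mid b+2c$, so Theorem~\ref{MTH} applies; since $\upsilon_H$ has order $2$ and $\upsilon_\eta^{24}=1$, the character always reduces to $\upsilon_\eta^{3(a+b+c+d)}$ read mod $24$. Using $J^{\w}_{k,t}(\upsilon_\eta^D\upsilon_H^{2t})=\eta^D J^{\w}_{k-D/2,t}(\upsilon_H^{2t})$ together with the Eichler--Zagier description $\bigoplus J^{\w}_{2\ast,\ast}=M_\ast(\SL_2(\ZZ))[\phi_{0,1},\phi_{-2,1}]$, I identify the ambient spaces: for $(4,0,0,0)$ it is $\eta^{12}M_{-4}(\SL_2(\ZZ))=0$; for $(2,2,0,0)$ it is $\eta^{12}J^{\w}_{-4,1}=0$; for $(0,4,0,0)$ and $(3,0,1,0)$ it is $\eta^{12}J^{\w}_{-4,2}=\CC\,\eta^{12}\phi_{-2,1}^2=\CC\,\vartheta^4$ (and $\vartheta^4$ is holomorphic); for $(8,0,0,0)$ the index is $0$ and the character trivial, so the space is $M_4(\SL_2(\ZZ))=\CC\,E_4$; and for $(1,2,1,0)$ the weak space is $\eta^{12}J^{\w}_{-4,3}=\CC\,\phi_{0,1}\vartheta^4$, while $\Tr^{(2)}_{1,2,1,0}$ is holomorphic and $\phi_{0,1}\vartheta^4$ is not (its $q^{1/2}\zeta^3$-coefficient is nonzero, yet $4\cdot\tfrac12\cdot3-9<0$), which forces $\Tr^{(2)}_{1,2,1,0}=0$. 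In step (3) the cases with vanishing ambient space give the stated theta relations at once; for $(0,4,0,0)$, $(3,0,1,0)$ and $(8,0,0,0)$ the scalar is read off from a single coefficient — the $q^{1/2}\zeta^2$-coefficient equals $1$ and $-2$ respectively, and the constant term equals $2$ — compared against $\vartheta^4=q^{1/2}(\zeta^{1/2}-\zeta^{-1/2})^4+\cdots$ and $E_4=1+\cdots$. Finally the second formula for $E_4$ follows by substituting the already-proved $\theta_{00}^4=\theta_{01}^4+\theta_{10}^4$ into $\tfrac12(\theta_{01}^8+\theta_{10}^8+\theta_{00}^8)$.

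The main obstacle is really only the bookkeeping of step (1): tracking the eighth-root-of-unity factors relating $\vartheta\lvert Y$ to the classical $\vartheta_{ab}$ so that every sign on the right-hand side comes out correctly. The one genuinely non-mechanical point is the case $(1,2,1,0)$, where the relevant one-dimensional weak space is \emph{nonzero} and it is the failure of its generator $\phi_{0,1}\vartheta^4$ to be holomorphic — not a dimension count — that forces that combination to vanish.
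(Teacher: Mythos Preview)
Your proof is correct and follows the same strategy as the paper: apply Theorem~\ref{MTH} to place $\Tr^{(2)}_{a,b,c,d}$ in a specific space $J_{k,t}(\upsilon_\eta^D)\subset\eta^D J^{\w}_{k-D/2,t}$, identify that space via the Eichler--Zagier structure theorem, and then either conclude it vanishes or pin down the scalar by a single Fourier coefficient. The paper only spells this out for \eqref{c01} (and, from the later corollaries, for \eqref{c05}, where the same holomorphicity obstruction you use for $(1,2,1,0)$ appears), declaring the remaining cases ``similar''; you have simply supplied the omitted details, including the explicit eighth-root bookkeeping of step~(1) that the paper leaves implicit in \eqref{eq:orbit-theta}.
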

Secondly, Theorem \ref{MTH} produces many new identities on Jacobi 
theta functions of order 2.

\begin{corollary}\label{coro2}
\begin{align}
&\label{c02} \Tr^{(2)}_{6,2,0,0}:&  &\theta_{01}^6\vartheta_{01}^2+\theta_{10}^6\vartheta_{10}^2+\theta_{00}^6\vartheta_{00}^2=2E_{4,1},\\
&\Tr^{(2)}_{10,2,0,0}:& &\theta_{01}^{10}\vartheta_{01}^2+\theta_{10}^{10}\vartheta_{10}^2- \theta_{00}^{10}\vartheta_{00}^2=-4\eta^{12}\phi_{0,1},\\
&\Tr^{(2)}_{14,2,0,0}:& &\theta_{01}^{14}\vartheta_{01}^2+\theta_{10}^{14}\vartheta_{10}^2+\theta_{00}^{14}\vartheta_{00}^2=2E_{8,1},\\
&\label{ap05}\Tr^{(2)}_{4,4,0,0}:& &\theta_{01}^4\vartheta_{01}^4+\theta_{10}^4\vartheta_{10}^4+\theta_{00}^4\vartheta_{00}^4=2E_{4,2},\\
&\Tr^{(2)}_{2,6,0,0}:& &\theta_{01}^2\vartheta_{01}^6+\theta_{10}^2\vartheta_{10}^6+\theta_{00}^2\vartheta_{00}^6=2E_{4,3},\\
&\label{ap06}\Tr^{(2)}_{0,8,0,0}:& &\vartheta_{01}^8+\vartheta_{10}^8+\vartheta_{00}^8=2E_{4,4}+\vartheta^8,\\
&\Tr^{(2)}_{7,0,1,0}:& &\theta_{01}^7\vartheta_{01}(\tau,2z)+\theta_{10}^7\vartheta_{10}(\tau,2z)+\theta_{00}^7\vartheta_{00}(\tau,2z)=2E_{4,2},\\
&\Tr^{(2)}_{6,0,2,0}:& &\theta_{01}^6\vartheta_{01}(\tau,2z)^2+\theta_{10}^6\vartheta_{10}(\tau,2z)^2+\theta_{00}^6\vartheta_{00}(\tau,2z)^2=2E_{4,4}+2\vartheta^8,\\
&\Tr^{(2)}_{12,4,0,0}:& &\theta_{01}^{12}\vartheta_{01}^4+\theta_{10}^{12}\vartheta_{10}^4+\theta_{00}^{12}\vartheta_{00}^4=2E_{8,2}+\frac{272}{43} \eta^{12}\vartheta^4,\\
&\Tr^{(2)}_{15,0,1,0}:& &\theta_{01}^{15}\vartheta_{01}(\tau,2z)+\theta_{10}^{15}\vartheta_{10}(\tau,2z)+\theta_{00}^{15}\vartheta_{00}(\tau,2z)=2E_{8,2}+\frac{2336}{43}\eta^{12}\vartheta^4,\\
&\Tr^{(2)}_{10,6,0,0}:& &\theta_{01}^{10}\vartheta_{01}^6+\theta_{10}^{10}\vartheta_{10}^6+\theta_{00}^{10}\vartheta_{00}^6=2E_{8,3}-\frac{28}{547}\eta^{12}\vartheta^4\phi_{01},\\
&\Tr^{(2)}_{8,8,0,0}:& &\theta_{01}^{8}\vartheta_{01}^8+\theta_{10}^{8}\vartheta_{10}^8+\theta_{00}^{8}\vartheta_{00}^8=2E_{8,4}-\frac{73}{43}\eta^{12}\vartheta^4\phi_{0,2},\\
&\Tr^{(2)}_{13,2,1,0}:& &\theta_{01}^{13}\vartheta_{01}^2\vartheta_{01}(\tau,2z)+\theta_{10}^{13}\vartheta_{10}^2\vartheta_{10}(\tau,2z)+\theta_{00}^{13}\vartheta_{00}^2\vartheta_{00}(\tau,2z)\\
\nonumber & & &\quad =2E_{8,3}+\frac{2160}{547}\eta^{12}\vartheta^4\phi_{01},\\
&\Tr^{(2)}_{5,2,1,0}:& &\theta_{01}^5\vartheta_{01}(\tau,2z)\vartheta_{01}(\tau,z)^2+\theta_{10}^5\vartheta_{10}(\tau,2z)\vartheta_{10}(\tau,z)^2+\theta_{00}^5\vartheta_{00}(\tau,2z)\vartheta_{00}(\tau,z)^2\\
\nonumber & & &\quad =2E_{4,3},\\
&\Tr^{(2)}_{11,4,1,0}:& &\theta_{01}^{11}\vartheta_{01}^4\vartheta_{01}(\tau,2z)+\theta_{10}^{11}\vartheta_{10}^4\vartheta_{10}(\tau,2z)+\theta_{00}^{12}\vartheta_{00}^4\vartheta_{00}(\tau,2z)\\
\nonumber & & &\quad =2E_{8,4}+\frac{271}{43}\eta^{12}\vartheta^4\phi_{0,2}.
\end{align}
\end{corollary}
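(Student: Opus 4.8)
The plan is to read every identity in the list as a single instance of Theorem~\ref{MTH} with $N=2$, the only real work being to identify the right-hand side inside a finite-dimensional space of Jacobi forms. First I would make the orbit completely explicit. For $N=2$ one has $R_2=\{(\tfrac12,0),(0,\tfrac12),(\tfrac12,\tfrac12)\}$, and combining \eqref{eq:orbit-theta} with the quasi-periodicity $\vartheta_{a+x,b+y}=e^{2\pi i ay}\vartheta_{a,b}$ shows that the three functions $\vartheta\lvert Y$ are, up to explicit roots of unity $c_Y$, exactly $\vartheta_{01}$, $\vartheta_{10}$ and $\vartheta_{00}$ (the odd characteristic $(0,0)$ being excluded). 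Since every summand of $\Tr^{(2)}_{a,b,c,d}$ carries the same total power $l=a+b+c+d$, a multiple of $2N=4$, the constants enter only through $c_Y^{\,l}$; computing these three numbers is what produces the signs on the left-hand sides, for instance the minus sign on the $\theta_{00}$-term of $\Tr^{(2)}_{10,2,0,0}$. After this step each left-hand side is literally the expansion of $\Tr^{(2)}_{a,b,c,d}$ in $\vartheta_{00},\vartheta_{01},\vartheta_{10}$ and their values $\theta_{00},\theta_{01},\theta_{10}$ at $z=0$.

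Next, Theorem~\ref{MTH} places each $\Tr^{(2)}_{a,b,c,d}$ in the explicit space $J_{(a+b+c+d)/2,\,(b+4c+4d)/2}\!\left(\upsilon_{\eta}^{3(a+b+c+d)}\cdot\upsilon_{H}^{b+2d}\right)$. I would normalize the $\eta$-multiplier by reducing $3(a+b+c+d)$ modulo $24$ to the exponent $D$ with $0\le D<24$; when $D\neq 0$ (this happens only for $\Tr^{(2)}_{10,2,0,0}$, where $D=12$) the relation $J^{\w}_{k,t}(\upsilon_{\eta}^{D}\cdot\upsilon_{H}^{2t})=\eta^{D}\cdot J^{\w}_{k-D/2,t}(\upsilon_{H}^{2t})$ lets me strip off an explicit factor $\eta^{D}$ and land in a space of genuine Jacobi forms, which is why that identity has the shape $-4\eta^{12}\phi_{0,1}$. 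In the remaining (trivial-character) cases I would exhibit a basis of the target space directly: the Jacobi--Eisenstein series $E_{2k',m}$ supplies the non-cuspidal direction, and the cuspidal directions are spanned by explicit products such as $\vartheta^{8}$, $\eta^{12}\vartheta^{4}$, $\eta^{12}\vartheta^{4}\phi_{0,1}$ and $\eta^{12}\vartheta^{4}\phi_{0,2}$. Each candidate is checked to lie in the correct space by matching weight, index and character and by a vanishing-order count via \eqref{ord} (using that $\vartheta$ has a simple zero along $z\in\ZZ\tau+\ZZ$).

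With a basis fixed, each asserted identity is a single linear relation in a finite-dimensional vector space, so it suffices to compare finitely many Fourier--Jacobi coefficients. Concretely I would expand both sides as power series in $q$ and $\zeta$ using the product formula for $\vartheta$, the $q$-expansions of $\eta$, $\phi_{0,1}$ and $\phi_{0,2}$ recalled in Section~\ref{sec:2}, and the known Fourier coefficients of the Eisenstein series $E_{2k,m}$, and then solve the resulting small linear system. Typically the constant term fixes the Eisenstein coefficient (accounting for the recurrent factor $2$), while the first one or two further coefficients fix the cusp-form coefficients.

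The main obstacle is entirely computational: pinning down the exact rational constants, in particular $\tfrac{272}{43}$, $\tfrac{2336}{43}$, $-\tfrac{28}{547}$, $\tfrac{2160}{547}$ and $\tfrac{271}{43}$. This demands (i) an exact dimension count for each target space, so that enough coefficients are compared to make the representation unique, and (ii) $q$-expansions of the Jacobi--Eisenstein series carried far enough and with correct normalizations; the unusual denominators $43$ and $547$ show that the cuspidal coefficients first become visible only at a relatively high order, where arithmetic slips are easy. By contrast the structural content, namely that each left-hand side is a holomorphic Jacobi form in the stated space, is immediate from Theorem~\ref{MTH}, so no further modularity argument is required.
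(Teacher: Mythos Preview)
Your proposal is correct and follows exactly the strategy the paper uses: apply Theorem~\ref{MTH} to place each $\Tr^{(2)}_{a,b,c,d}$ in the appropriate finite-dimensional space $J_{k,t}(\chi)$, exhibit a basis (Eisenstein series plus explicit cusp forms), and then compare finitely many Fourier coefficients to pin down the linear combination. The paper's own proof is extremely terse---it works out only the single case~\eqref{c02} in detail (``$\Tr^{(2)}_{6,2,0,0}\in J_{4,1}=\CC\cdot E_{4,1}$, compare first Fourier coefficients'') and declares the rest similar---so your write-up is in fact more explicit than the paper's, particularly in tracking the root-of-unity constants $c_Y^{\,l}$ that produce the sign pattern and in isolating the unique case $\Tr^{(2)}_{10,2,0,0}$ where the $\eta$-character is nontrivial.
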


Theorem \ref{MTH} also yields a lot of interesting identities on Jacobi theta functions of order 3.

\begin{corollary}\label{coro3}
\begin{align}
&\label{c03}\Tr^{(3)}_{0,6,0,0}:& &\sum_{Y\in R_3} (\vartheta \lvert Y)^6 (\tau,z)=2\vartheta^6(\tau,z),\\
&\label{c04}\Tr^{(3)}_{6,0,0,0}& &\sum_{Y\in R_3} (\vartheta \lvert Y)^6 (\tau,0)=0,\\
&\Tr^{(3)}_{3,3,0,0}:& &\sum_{Y\in R_3} (\vartheta \lvert Y)^3 (\tau,0)(\vartheta \lvert Y)^3 (\tau,z)  =0,\\
&\Tr^{(3)}_{12,0,0,0}:& &\sum_{Y\in R_3} (\vartheta \lvert Y)^{12} (\tau,0)=-72\eta^{12}(\tau),\\
&\Tr^{(3)}_{9,3,0,0}:& &\sum_{Y\in R_3} (\vartheta \lvert Y)^{9} (\tau,0)(\vartheta \lvert Y)^{3} (\tau,z)=-36\eta^{12}(\tau) \frac{\vartheta(\tau,2z)}{\vartheta(\tau,z)},\\
&\label{c05}\Tr^{(3)}_{6,6,0,0}:& &\sum_{Y\in R_3} (\vartheta \lvert Y)^{6} (\tau,0)(\vartheta \lvert Y)^{6} (\tau,z)=-18\eta^{12}(\tau) \left( \frac{\vartheta(\tau,2z)}{\vartheta(\tau,z)} \right)^2,\\
&\Tr^{(3)}_{3,9,0,0}:& &\sum_{Y\in R_3} (\vartheta \lvert Y)^{3} (\tau,0)(\vartheta \lvert Y)^{9} (\tau,z)=-9\eta^{12}(\tau) \left( \frac{\vartheta(\tau,2z)}{\vartheta(\tau,z)} \right)^3,\\
&\Tr^{(3)}_{0,12,0,0}:& &\sum_{Y\in R_3} (\vartheta \lvert Y)^{12} (\tau,z)=-36\eta^{12}(\tau)  \frac{\vartheta(\tau,4z)}{\vartheta(\tau,2z)}+2\vartheta(\tau,z)^{12},\\
&\Tr^{(3)}_{21,3,0,0}:& &\sum_{Y\in R_3} (\vartheta \lvert Y)^{21} (\tau,0)(\vartheta \lvert Y)^{3} (\tau,z)=756\Delta \frac{\vartheta(\tau,2z)}{\vartheta(\tau,z)},\\
&\Tr^{(3)}_{24,0,0,0}& &\sum_{Y\in R_3} (\vartheta \lvert Y)^{24} (\tau,0)=1512\Delta,\\
&\Tr^{(3)}_{4,1,1,0}:& &\sum_{Y\in R_3} (\vartheta \lvert Y)^4 (\tau,0)(\vartheta \lvert Y)(\tau,z)(\vartheta \lvert Y) (\tau,2z)=0,\\
&\Tr^{(3)}_{1,4,1,0}:& &\sum_{Y\in R_3} (\vartheta \lvert Y) (\tau,0)(\vartheta \lvert Y)^4(\tau,z)(\vartheta \lvert Y) (\tau,2z)=0,\\
&\Tr^{(3)}_{2,2,2,0}:& &\sum_{Y\in R_3} (\vartheta \lvert Y)^2 (\tau,0)(\vartheta \lvert Y)^2(\tau,z)(\vartheta \lvert Y)^2 (\tau,2z)=0,\\
&\Tr^{(3)}_{0,3,3,0}:& &\sum_{Y\in R_3} (\vartheta \lvert Y)^3(\tau,z)(\vartheta \lvert Y)^3 (\tau,2z)=2\vartheta^3(\tau,z)\vartheta^3(\tau,2z),\\
&\Tr^{(3)}_{5,0,0,1}:& &\sum_{Y\in R_3} (\vartheta \lvert Y)^5(\tau,0)(\vartheta \lvert Y) (\tau,3z)=-3\vartheta^5(\tau,z)\vartheta(\tau,2z),\\
&\Tr^{(3)}_{3,1,1,1}:& &\sum_{Y\in R_3} (\vartheta \lvert Y)^3(\tau,0)(\vartheta \lvert Y) (\tau,z)(\vartheta \lvert Y) (\tau,2z)(\vartheta \lvert Y) (\tau,3z)=0,\\
&\Tr^{(3)}_{2,3,0,1}:& &\sum_{Y\in R_3} (\vartheta \lvert Y)^2(\tau,0)(\vartheta \lvert Y)^3 (\tau,z)(\vartheta \lvert Y) (\tau,3z)=-3\vartheta^4(\tau,z)\vartheta^2(\tau,2z),\\
&\Tr^{(3)}_{0,4,1,1}:& &\sum_{Y\in R_3} (\vartheta \lvert Y)^4(\tau,z)(\vartheta \lvert Y) (\tau,2z)(\vartheta \lvert Y) (\tau,3z)=-\vartheta^4(\tau,z)\vartheta(\tau,2z)\vartheta(\tau,3z),\\
&\Tr^{(3)}_{10,1,1,0}:& &\sum_{Y\in R_3} (\vartheta \lvert Y)^{10}(\tau,0)(\vartheta \lvert Y) (\tau,z)(\vartheta \lvert Y) (\tau,2z)=-3\eta^{12}\frac{\vartheta(\tau,2z)}{\vartheta(\tau,z)}\phi_{0,1},
\end{align}
\begin{align}
&\Tr^{(3)}_{11,0,0,1}:& &\sum_{Y\in R_3} (\vartheta \lvert Y)^{11}(\tau,0)(\vartheta \lvert Y) (\tau,3z)=-3\eta^{12}\frac{\vartheta(\tau,2z)}{\vartheta(\tau,z)}\left( \phi_{0,1}\phi_{0,2}- 15 \phi_{0,3}  \right),\\
&\Tr^{(3)}_{1,1,4,0}:& &\sum_{Y\in R_3} (\vartheta \lvert Y)(\tau,0)(\vartheta \lvert Y) (\tau,z)(\vartheta \lvert Y)^4 (\tau,2z)=3\vartheta^4(\tau,z)\vartheta(\tau,2z)\vartheta(\tau,3z),\\
&\Tr^{(3)}_{1,2,2,1}:& &\sum_{Y\in R_3} (\vartheta \lvert Y)(\tau,0)(\vartheta \lvert Y)^2 (\tau,z)(\vartheta \lvert Y)^2 (\tau,2z)(\vartheta \lvert Y)(\tau,3z)=0,\\
&\Tr^{(3)}_{7,4,1,0}:& &\sum_{Y\in R_3} (\vartheta \lvert Y)^7(\tau,0)(\vartheta \lvert Y)^4 (\tau,z)(\vartheta \lvert Y) (\tau,2z)=-24\eta^{12}\frac{\vartheta(\tau,3z)}{\vartheta(\tau,z)},\\
&\Tr^{(3)}_{15,3,0,0}:& &\sum_{Y\in R_3} (\vartheta \lvert Y)^{15}(\tau,0)(\vartheta \lvert Y)^3 (\tau,z)=3\eta^{6}E_6\frac{\vartheta(\tau,2z)}{\vartheta(\tau,z)},\\
&\Tr^{(3)}_{18,0,0,0}:& &\sum_{Y\in R_3} (\vartheta \lvert Y)^{18}(\tau,0)=6\eta^{6}E_6,\\
&\Tr^{(3)}_{27,3,0,0}:& &\sum_{Y\in R_3} (\vartheta \lvert Y)^{27}(\tau,0)(\vartheta \lvert Y)^3 (\tau,z)=-90\eta^{18}E_6\frac{\vartheta(\tau,2z)}{\vartheta(\tau,z)},\\
&\Tr^{(3)}_{30,0,0,0}:& &\sum_{Y\in R_3} (\vartheta \lvert Y)^{30}(\tau,0)=-180\eta^{18}E_6,\\
&\label{c06}\Tr^{(5)}_{10,0,0,0}:& &\sum_{Y\in R_5}(\vartheta \lvert Y)^{10} (\tau,0)=0,\\
&\Tr^{(7)}_{14,0,0,0}:& &\sum_{Y\in R_7}(\vartheta \lvert Y)^{14} (\tau,0)=0.
\end{align}
\end{corollary}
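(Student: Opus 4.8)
The plan is to recognize each identity in the list as a special case of Theorem~\ref{MTH}. For a given tuple $(N;a,b,c,d)$ one first checks the two arithmetic hypotheses $a+b+c+d\equiv 0\ \m\ 2N$ and $N\mid(b+2c)$; once these hold, Theorem~\ref{MTH} places $\Tr^{(N)}_{a,b,c,d}$ in the finite-dimensional space $J_{k,t}(\chi)$ with $k=\tfrac{a+b+c+d}{2}$, $t=\tfrac{b+4c+N^{2}d}{2}$ and $\chi=\upsilon_\eta^{3(a+b+c+d)}\upsilon_H^{b+Nd}$. Thus each line of the corollary reduces to (a) pinning down this space and (b) identifying the vector $\Tr^{(N)}_{a,b,c,d}$ inside it by matching finitely many Fourier coefficients.

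For step (a) I would use the structure theory recalled in Section~\ref{sec:2}. Writing $\chi|_{\SL_2(\ZZ)}=\upsilon_\eta^{D}$ with $0\le D<24$, one has $\Tr^{(N)}_{a,b,c,d}/\eta^{D}\in J^{\w}_{k-D/2,\,t}(\upsilon_H^{2t})$, and this space sits inside the Eichler--Zagier polynomial algebra generated by $\phi_{0,1},\phi_{-2,1}$ over $M_{*}(\SL_2(\ZZ))$, together with the odd-weight reduction $J^{\w}_{2k+1,t}=\phi_{-1,2}\cdot J^{\w}_{2k+2,t-2}$ and, for half-integral index, the reductions through $\phi_{0,3/2}$ and $\phi_{-1,1/2}$. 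This immediately settles all the vanishing identities whose reduced weight $k-D/2$ is too small: e.g.\ $\Tr^{(5)}_{10,0,0,0}$ has weight $5$, index $0$ and character $\upsilon_\eta^{6}$, so dividing by $\eta^{6}$ lands it in $M_{2}(\SL_2(\ZZ))=0$, and the same argument disposes of $\Tr^{(7)}_{14,0,0,0}$, $\Tr^{(3)}_{6,0,0,0}$, and several of the order-$3$ entries. The remaining vanishing identities lie in small but nonzero spaces, and for those one still only has to check that the first few coefficients of $\Tr^{(N)}_{a,b,c,d}$ vanish.

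For step (b), i.e.\ the genuinely nonzero identities, I would fix an explicit basis of the relevant one- or two-dimensional space — assembled from the Jacobi--Eisenstein series $E_{2k,m}$, the $\eta$-powers times powers of $\vartheta(\tau,z)$, $\Delta$, $E_{6}$, and the weak generators $\phi_{0,1},\phi_{0,2},\phi_{0,3},\phi_{0,3/2}$ that occur on the right-hand sides — and then solve for the coefficients of $\Tr^{(N)}_{a,b,c,d}$ in that basis. The Fourier expansion of $\Tr^{(N)}_{a,b,c,d}$ is built term by term: by \eqref{eq:orbit-theta} each orbit function $\vartheta\lvert Y$ with $Y=(u/N,v/N)\in R_N$ is a root of unity times $\vartheta_{u/N+1/2,\,v/N+1/2}$, whose $q$- and $\zeta$-expansion is explicit from \eqref{thetaab}; one forms the relevant products and sums over the $N^{2}-1$ representatives. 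A useful shortcut for the product-type right-hand sides (for instance $\Tr^{(3)}_{9,3,0,0}=-36\,\eta^{12}\,\vartheta(\tau,2z)/\vartheta(\tau,z)$, and the other $\vartheta$-quotient and $\phi_{0,m}$ formulas) is to feed in Proposition~\ref{prop01} and the order-$2$ identity \eqref{ap01} rather than expanding from scratch, the overall scalar then being read off from the leading term.

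The conceptual content is already contained in Theorem~\ref{MTH} and in the known structure of $J_{k,t}$, so what remains is bookkeeping — and that is where the effort lies. The main obstacle is twofold and purely computational: determining the dimensions of the many spaces $J_{k,t}(\chi)$ that occur (routine via the Eichler--Zagier formulas and the reductions above, but numerous), and carrying the orbit sum together with the $\vartheta$-expansions far enough that the cancellation of the ``fractional'' $q$-powers contributed by individual $\vartheta\lvert Y$ becomes visible and the surviving coefficients determine the answer. The appearance of denominators such as $43$ and $547$ in entries like $\Tr^{(2)}_{12,4,0,0}$ and $\Tr^{(2)}_{10,6,0,0}$ confirms that one must solve a small linear system separating the Eisenstein part from the cusp part; since every target space here has dimension at most two, a fixed and modest number of coefficients always suffices.
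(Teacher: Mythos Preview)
Your proposal is correct and follows essentially the same route as the paper: apply Theorem~\ref{MTH} to place each $\Tr^{(N)}_{a,b,c,d}$ in the appropriate space $J_{k,t}(\chi)$, reduce via $\eta^{D}$ and the structure results of \S\ref{sec:2} to identify that space explicitly, and then fix the constants (or confirm vanishing) by comparing leading Fourier coefficients. The paper only writes out a handful of representative cases (e.g.\ \eqref{c03}, \eqref{c04}, \eqref{c05}, \eqref{c06}) and declares the rest similar; your outline is in fact more systematic, though note that the denominators $43$ and $547$ you mention belong to Corollary~\ref{coro2} rather than to the statement at hand.
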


\begin{proof}[\textbf{Proof of Corollaries}]
Since the proofs of these identities are similar, we only give some of them.
\begin{enumerate}
\item Proof of (\ref{c01}):\\
We have $\Tr^{(2)}_{2,2,0,0}=\theta_{01}^2\vartheta_{01}^2+\theta_{10}^2\vartheta_{10}^2-\theta_{00}^2\vartheta_{00}^2\in J_{2,1}(\upsilon_{\eta}^{12})$. From $J_{2,1}(\upsilon_{\eta}^{12}) \subset \eta^{12}J^\w_{-4,1}$ and $J^\w_{-4,1}=\{0\}$, we conclude $J_{2,1}(\upsilon_{\eta}^{12})=\{0 \}$, which proves the desired identity.
\item Proof of $(\ref{c02})$:\\
We have $\Tr^{(2)}_{6,2,0,0}=\theta_{01}^6\vartheta_{01}^2+\theta_{10}^6\vartheta_{10}^2+\theta_{00}^6\vartheta_{00}^2\in J_{4,1}$. Since $J_{4,1}=\CC\cdot E_{4,1}$, we finish the proof by comparing their first Fourier coefficients.
\item Proof of (\ref{c03}) and (\ref{c04}):\\
Firstly, $\Tr^{(3)}_{0,6,0,0}=\sum_{Y\in R_3} (\vartheta \lvert Y)^6 (\tau,z) \in J_{3,3}(\upsilon_{\eta}^{18}) $. It is easy to see  
$$
J_{3,3}(\upsilon_{\eta}^{18})\subset \eta^{18} \cdot J_{-6,3}^\w=\CC \cdot \eta^{18} \phi_{-2,1}^3=\CC \cdot \vartheta^6(\tau,z).
$$
We hence prove the first formula by comparing the first Fourier coefficients. If we put $z=0$ then the second formula is obtained.
\item Proof of (\ref{c05}):\\
Similarly, we have 
$\Tr^{(3)}_{6,6,0,0}=\sum_{Y\in R_3} (\vartheta \lvert Y)^{6} (\tau,0)(\vartheta \lvert Y)^{6} (\tau,z) \in J_{6,3}(\upsilon_{\eta}^{12})$. Since
$$
J_{6,3}(\upsilon_{\eta}^{12})\subset \eta^{12}J_{0,3}^\w= \eta^{12}\latt{\phi_{0,1}^3, \phi_{0,1}\phi_{0,2},\phi_{0,3} }_{\CC}
$$
and $\eta^{12}\phi_{0,1}^3$ is not a holomorphic Jacobi form, we get
$$J_{6,3}(\upsilon_{\eta}^{12})=\eta^{12}\latt{\phi_{0,1}\phi_{0,2},\phi_{0,3} }_{\CC}.$$   
By comparing their first Fourier coefficients, we deduce the desired identity.
\item Proof of (\ref{c06}):
$$\Tr^{(5)}_{10,0,0,0}=\sum_{Y\in R_5}(\vartheta \lvert Y)^{10}(\tau,0)\in J_{5,0}(\upsilon_{\eta}^{6}).
$$
In view of $J_{5,0}(\upsilon_{\eta}^{6})\subset \eta^6 J_{2,0}^\w = \eta^6 M_{2}(\SL_2(\ZZ))=\{0\}$, the desired identity is proved.
\end{enumerate}
\end{proof}

Obviously, we cannot derive any holomorphic Jacobi form of weight $6$ from Theorem \ref{MTH}. In order to construct this type of Jacobi forms, we make use of the following anti-symmetric combinations of the functions in the orbit of $\vartheta$.

\begin{proposition}\label{prop02}
Let $a$, $b$, $c\in \NN$ satisfying $b\equiv 0 \m 2$ and $a+b+c\equiv 0 \m 4$. We put
$$S_{ij}=\left(\vartheta\lvert X_i\right)_{z=0}^a\left(\vartheta\lvert X_i\right)^b(\tau,z)\left(\vartheta\lvert X_i\right)^c(\tau,2z)-\left(\vartheta\lvert X_j\right)_{z=0}^a\left(\vartheta\lvert X_j\right)^b(\tau,z)\left(\vartheta\lvert X_j\right)^c(\tau,2z),$$
$$s_{ij}=\left(\vartheta\lvert X_i\right)^{a+b+c}(\tau,0)-\left(\vartheta\lvert X_j\right)^{a+b+c}(\tau,0)$$
where $X_1=\left(\frac{1}{2},0\right)$, $X_2=\left(0,\frac{1}{2}\right)$, $X_3=\left(\frac{1}{2},\frac{1}{2}\right)$.
We define
\begin{align*}
&A_{a,b,c}^{(1)}=\frac{1}{6} \left( S_{12}s_{13}s_{23}+S_{13}s_{12}s_{23}+S_{23}s_{13}s_{12}\right),\\
&A_{a,b,c}^{(2)}=\frac{1}{6} \left( S_{12}S_{13}s_{23}+S_{13}s_{12}S_{23}+S_{23}s_{13}S_{12}\right),\\
&A_{a,b,c}^{(3)}=\frac{1}{2} S_{12}S_{13}S_{23}.
\end{align*} 
Then when $a+b+c\equiv 4 \m 8$, we have
$$ A_{a,b,c}^{(i)}=1+O(q) \in J_{\frac{3(a+b+c)}{2},2ic+\frac{ib}{2}}, \quad \text{for} \; i=1,2,3.$$
When $a+b+c\equiv 0  \m 8$, we get
$$ A_{a,b,c}^{(i)} \in J_{\frac{3(a+b+c)}{2},2ic+\frac{ib}{2}}(\upsilon_{\eta}^{12}), \quad \text{for} \; i=1,2,3.$$
\end{proposition}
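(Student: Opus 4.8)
The plan is to treat $\{X_1,X_2,X_3\}$ as a system of representatives of $R_2$ and to exploit that $\SL_2(\ZZ)$ merely permutes these three $2$-torsion points (through the natural map $\SL_2(\ZZ)\to\SL_2(\ZZ/2)\cong S_3$), while the integral Heisenberg group fixes each of them. Set $l=a+b+c$ and, for $r\in\{1,2,3\}$, put $F_r=(\vartheta\lvert X_r)^a(\tau,0)(\vartheta\lvert X_r)^b(\tau,z)(\vartheta\lvert X_r)^c(\tau,2z)$, so that $f_r=F_r(\tau,0)=(\vartheta\lvert X_r)^{l}(\tau,0)$, $S_{rs}=F_r-F_s$ and $s_{rs}=f_r-f_s$. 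The hypotheses $b\equiv0\m 2$ and $l\equiv0\m 4$ are precisely the $N=2$, $d=0$ instance of the divisibility conditions of Theorem \ref{MTH}, so the intermediate computation in its proof applies verbatim (the $(\tau,0)$-factor is handled identically: the weight-$\tfrac12$ modular transformation of $(\vartheta\lvert X_r)(\tau,0)$ under $\SL_2(\ZZ)$ is the $z=0$ specialisation of \eqref{orbit1}): all ``bad'' phase factors become trivial, since $2\mid b$ kills $\upsilon_H^{b}$ and puts $(b+2c)(y_1h_2-h_1y_2)$ in $\ZZ$, while $4\mid l$ kills $\upsilon_H^{l}$ and puts $l\det(Y_2;Y_1M)$ in $2\ZZ$ --- here $\det(Y_2;Y_1M)\in\tfrac12\ZZ$ because $2Y_r\in\ZZ^2$ and $Y_1M\equiv Y_2\m \ZZ^2$. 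I therefore obtain, with $\sigma_M\in S_3$ the permutation of $\{1,2,3\}$ induced by $M$, the clean relations $F_r\lvert_{l/2,\,b/2+2c}M=\upsilon_\eta^{3l}(M)\,F_{\sigma_M(r)}$ and $F_r\lvert_{l/2,\,b/2+2c}h=F_r$ for all $M\in\SL_2(\ZZ)$ and $h\in H(\ZZ)$, together with the analogous identities for the $f_r$ (of index $0$). Consequently $S_{rs}\lvert M=\upsilon_\eta^{3l}(M)\,S_{\sigma_M(r)\sigma_M(s)}$ and $s_{rs}\lvert M=\upsilon_\eta^{3l}(M)\,s_{\sigma_M(r)\sigma_M(s)}$, where $S_{sr}=-S_{rs}$, $s_{sr}=-s_{rs}$, and all of these are $H(\ZZ)$-invariant.

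I would then observe that each of $2A^{(3)}=S_{12}S_{13}S_{23}$, $6A^{(1)}=S_{12}s_{13}s_{23}+S_{13}s_{12}s_{23}+S_{23}s_{13}s_{12}$ and $6A^{(2)}=S_{12}S_{13}s_{23}+S_{13}S_{23}s_{12}+S_{12}S_{23}s_{13}$ is \emph{alternating} in the three indices. It suffices to verify this on the transposition of $\{1,2,3\}$ induced by $T=\begin{psmallmatrix}1&1\\0&1\end{psmallmatrix}$ (which swaps $X_1$ with $X_3$ and fixes $X_2$), since the image of $T$ generates the abelianisation $\ZZ/12$ of $\SL_2(\ZZ)$; a one-line substitution shows that each of these three expressions changes sign. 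Using multiplicativity of the slash operator across products of the appropriate weights and indices, this yields $A^{(r)}\lvert_{3l/2,\,t_r}M=\upsilon_\eta^{9l}(M)\,\sign(\sigma_M)\,A^{(r)}$ and $A^{(r)}\lvert_{3l/2,\,t_r}h=A^{(r)}$, with $t_r=2rc+\tfrac{rb}2$ read off from the factors of weight $\tfrac12$ carrying indices $\tfrac b2$ and $2c$ (the latter by \eqref{orbit3}). Now $M\mapsto\sign(\sigma_M)$ and $\upsilon_\eta^{12}$ are both the unique order-$2$ character of $\SL_2(\ZZ)$ --- both send $T$ to $-1$ --- so $\sign(\sigma_M)=\upsilon_\eta^{12}(M)$, and $A^{(r)}$ transforms under $\Gamma^J(\ZZ)$ by $\upsilon_\eta^{9l+12}$; finally $9l+12\equiv0\m 24$ when $l\equiv4\m 8$, while $9l+12\equiv12\m 24$ when $l\equiv0\m 8$.

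For holomorphicity: $\vartheta$, hence each $\vartheta\lvert X_r$ by \eqref{ord}, satisfies the Fourier-support condition of a holomorphic Jacobi form, and this condition is stable under products (Cauchy--Schwarz) and differences, so all $F_r$, $f_r$, and hence all $A^{(r)}$, satisfy it. Combined with the transformation laws above this gives $A^{(r)}\in J_{3l/2,\,t_r}$ when $l\equiv4\m 8$ and $A^{(r)}\in J_{3l/2,\,t_r}(\upsilon_\eta^{12})$ when $l\equiv0\m 8$. It remains to pin down the normalisation in the first case from the constant term: by \eqref{eq:orbit-theta} one has, up to roots of unity, $\vartheta\lvert X_1=-\vartheta_{01}$, $\vartheta\lvert X_2=i\vartheta_{10}$, $\vartheta\lvert X_3=e^{3\pi i/4}\vartheta_{00}$, so letting $q\to0$ gives $F_1,f_1\to1$ (since $l$ is even), $F_2,f_2\to0$ (since $\vartheta_{10}$ vanishes at $q=0$), and $F_3,f_3\to(e^{3\pi i/4})^{l}=-1$ (since $l\equiv4\m 8$); hence $S_{12},S_{23},s_{12},s_{23}\to1$, $S_{13},s_{13}\to2$, whence $2A^{(3)}\to2$, $6A^{(1)}\to6$, $6A^{(2)}\to6$, i.e., $A^{(r)}=1+O(q)$ --- the exponents of $q$ being integral since the character is trivial here.

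The only real subtlety I expect lies in the character bookkeeping of the first two paragraphs: identifying the sign of the $S_3$-action with $\upsilon_\eta^{12}$ and carrying it through to $\upsilon_\eta^{9l+12}$ with the correct reduction modulo $24$. The transformation laws for the individual $F_r$ are exactly the intermediate step of the proof of Theorem \ref{MTH} specialised to $N=2$, $d=0$, and the alternating-sum structure --- the conceptual heart of the argument --- is confirmed by the single transposition computation above.
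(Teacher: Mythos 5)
Your argument is, in substance, the paper's own: the paper likewise reduces everything to the transformation of the vector $(S_{12},S_{13},S_{23})$ under the generators of $\SL_2(\ZZ)$ (recording the induced permutations of $X_1,X_2,X_3$ as explicit monomial matrices with the scalar $\upsilon_\eta(\cdot)^{3(a+b+c)}$), notes the $H(\ZZ)$-invariance of the $S_{ij}$, and finishes by direct computation. Your reduction of the phase bookkeeping to the $N=2$, $d=0$ case of Theorem \ref{MTH}, the index/weight count, the identification of $\sign\circ\sigma$ with $\upsilon_\eta^{12}$, the reduction of $9l+12$ modulo $24$, and the constant-term evaluation via $\vartheta\lvert X_1=-\vartheta_{01}$, $\vartheta\lvert X_2=i\vartheta_{10}$, $\vartheta\lvert X_3=e^{3\pi i/4}\vartheta_{00}$ are all correct.

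There is, however, one logical slip in the central step. To conclude $A^{(r)}\lvert M=\upsilon_\eta^{9l}(M)\sign(\sigma_M)A^{(r)}$ for \emph{all} $M\in\SL_2(\ZZ)$, you must verify that the three expressions are anti-invariant under a generating set of the image $S_3$, hence under \emph{two} distinct transpositions --- in practice under both $T$ (inducing $(13)$) and $S$ (inducing $(12)$), which is exactly what the paper's two matrices record. Checking $T$ alone only controls the preimage of $\langle(13)\rangle$, an index-$3$ subgroup of $\SL_2(\ZZ)$. The appeal to the abelianisation $\ZZ/12$ is circular here: it determines \emph{which} character you have only after you already know $A^{(r)}$ spans a one-dimensional $\SL_2(\ZZ)$-subrepresentation, which is precisely what is being proved. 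The gap is harmless in the sense that the $S$-check is a second one-line substitution and all three expressions do pass it (they are genuinely alternating under all of $S_3$), but as written the justification for omitting it is not valid and should be replaced by the explicit verification for $S$.
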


\begin{proof}
Denote $T=\begin{psmallmatrix}
 1 & 1 \\ 
0 & 1   
\end{psmallmatrix}$ and $S=\begin{psmallmatrix}
 0 & -1 \\ 
1 & 0   
\end{psmallmatrix}$. We find
\begin{align*}
\left(\begin{array}{c}
S_{12}\vert T \\ 
S_{13}\vert T \\ 
S_{23}\vert T
\end{array}  \right)= \left( \begin{array}{ccc}
0 & 0 & -u \\ 
0 & -u & 0 \\ 
-u & 0 & 0
\end{array}  \right) \left(\begin{array}{c}
S_{12} \\ 
S_{13} \\ 
S_{23}
\end{array}  \right),& &\left(\begin{array}{c}
S_{12}\vert S \\ 
S_{13}\vert S \\ 
S_{23}\vert S 
\end{array}  \right)= \left( \begin{array}{ccc}
-v & 0 & 0 \\ 
0 & 0 & v \\ 
0 & v & 0
\end{array}  \right) \left(\begin{array}{c}
S_{12} \\ 
S_{13} \\ 
S_{23}
\end{array}  \right),
\end{align*}
where $u=\upsilon_{\eta}(T)^{3(a+b+c)}$ and $v=\upsilon_{\eta}(S)^{3(a+b+c)}$.
It is clear that $S_{ij}$ is invariant under the action of the integral Heisenberg group. We then complete the proof by direct computation.
\end{proof}

As a corollary, we express Jacobi--Eisenstein series of weight 6 in terms of Jacobi theta functions of order 2.

\begin{corollary}
\begin{align}
&A_{2,2,0}^{(1)}:& &2E_{6,1}=(2\theta_{01}^{10}+2\theta_{01}^6\theta_{10}^4-\theta_{01}^2\theta_{10}^8)\vartheta_{01}^2+(-2\theta_{10}^{10}-2\theta_{01}^4\theta_{10}^6+\theta_{01}^8\theta_{10}^2)\vartheta_{10}^2,\\
&\label{ap07}A_{2,2,0}^{(2)}:&  &2E_{6,2}=(2\theta_{01}^{8}+\theta_{01}^4\theta_{10}^4)\vartheta_{01}^4+(2\theta_{01}^{6}\theta_{10}^2-2\theta_{01}^2\theta_{10}^6)\vartheta_{01}^2\vartheta_{10}^2-(2\theta_{10}^{8}+\theta_{01}^4\theta_{10}^4)\vartheta_{10}^4,\\
&A_{2,2,0}^{(3)}:& &2E_{6,3}=2\theta_{01}^6\vartheta_{01}^6+3\theta_{01}^4\theta_{10}^2\vartheta_{01}^4\vartheta_{10}^2-3\theta_{01}^2\theta_{10}^4\vartheta_{01}^2\vartheta_{10}^4-2\theta_{10}^6\vartheta_{10}^6-\frac{44}{61}\eta^6\vartheta^6,\\
&A_{4,0,0}^{(3)}:& &2E_6=2\theta_{01}^{12}+3\theta_{01}^{8}\theta_{10}^{4}-3\theta_{01}^{4}\theta_{10}^8-2\theta_{10}^{12}.
\end{align}
\end{corollary}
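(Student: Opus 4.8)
The plan is to read off all four identities directly from Proposition \ref{prop02}, after (i) writing the functions $\vartheta\lvert X_i$ occurring there in terms of the classical theta constants of order $2$, and (ii) recalling the structure of the spaces of holomorphic Jacobi forms of weight $6$ and index $\le 3$.

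First I would set up the dictionary. By \eqref{eq:orbit-theta} and the periodicity relation $\vartheta_{a+x,b+y}=e^{2\pi iay}\vartheta_{a,b}$ ($x,y\in\ZZ$), applied to $X_1=(\tfrac12,0)$, $X_2=(0,\tfrac12)$, $X_3=(\tfrac12,\tfrac12)$, one gets $\vartheta\lvert X_1=-\vartheta_{01}$, $\vartheta\lvert X_2=i\,\vartheta_{10}$ and $\vartheta\lvert X_3=\xi\,\vartheta_{00}$ with $\xi$ an explicit $8$th root of unity satisfying $\xi^4=-1$ (while $(-1)^4=i^4=1$). In Proposition \ref{prop02} each $\vartheta\lvert X_i$ enters only through $(\vartheta\lvert X_i)^{a}_{z=0}(\vartheta\lvert X_i)^{b}(\tau,z)(\vartheta\lvert X_i)^{c}(\tau,2z)$, carrying the total power $a+b+c\equiv0\m4$, and, in $s_{ij}$, through $(\vartheta\lvert X_i)^{a+b+c}$, so these phases disappear. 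For $(a,b,c)=(2,2,0)$ this yields
\[
S_{12}=\theta_{01}^2\vartheta_{01}^2-\theta_{10}^2\vartheta_{10}^2,\qquad S_{13}=\theta_{01}^2\vartheta_{01}^2+\theta_{00}^2\vartheta_{00}^2,\qquad S_{23}=\theta_{10}^2\vartheta_{10}^2+\theta_{00}^2\vartheta_{00}^2,
\]
\[
s_{12}=\theta_{01}^4-\theta_{10}^4,\qquad s_{13}=\theta_{01}^4+\theta_{00}^4,\qquad s_{23}=\theta_{10}^4+\theta_{00}^4,
\]
while for $(a,b,c)=(4,0,0)$ one has simply $S_{ij}=s_{ij}$. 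Using the classical relation $\theta_{00}^4=\theta_{01}^4+\theta_{10}^4$ together with \eqref{c01}, $\theta_{00}^2\vartheta_{00}^2=\theta_{01}^2\vartheta_{01}^2+\theta_{10}^2\vartheta_{10}^2$, all of $S_{13},S_{23},s_{13},s_{23}$ become explicit integer combinations of $\theta_{01}^2\vartheta_{01}^2,\theta_{10}^2\vartheta_{10}^2$, resp. $\theta_{01}^4,\theta_{10}^4$ (for instance $S_{13}=2\theta_{01}^2\vartheta_{01}^2+\theta_{10}^2\vartheta_{10}^2$ and $s_{13}=2\theta_{01}^4+\theta_{10}^4$).

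Next I would invoke Proposition \ref{prop02}. Since $b=2$ is even and $2+2+0\equiv4\m8$, it gives $A^{(i)}_{2,2,0}=1+O(q)\in J_{6,i}$ for $i=1,2,3$; since $4+0+0\equiv4\m8$, it gives $A^{(3)}_{4,0,0}=1+O(q)\in J_{6,0}=M_6(\SL_2(\ZZ))$. By \cite{EZ}, $M_6(\SL_2(\ZZ))=\CC E_6$, the spaces $J_{6,1}=\CC E_{6,1}$ and $J_{6,2}=\CC E_{6,2}$ are one-dimensional, and $J_{6,3}=\CC E_{6,3}\oplus\CC\,\eta^6\vartheta^6$ is two-dimensional, the form $\eta^6\vartheta^6=\Delta\cdot\phi_{-2,1}^3$ being a cusp form. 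As $E_6$ and each $E_{6,m}$ has constant term $1$, the one-dimensional cases are forced: $A^{(3)}_{4,0,0}=E_6$ and $A^{(i)}_{2,2,0}=E_{6,i}$ for $i=1,2$. For the remaining case one writes $A^{(3)}_{2,2,0}=E_{6,3}+c\,\eta^6\vartheta^6$ (the coefficient of $E_{6,3}$ being $1$ by the constant term), and it only remains to compare one further Fourier coefficient — using the theta expansion just obtained against that of $\eta^6\vartheta^6$ — to obtain $c=\tfrac{22}{61}$.

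The last step is to substitute Step 1 into the defining polynomials of $A^{(i)}_{2,2,0}$ and $A^{(3)}_{4,0,0}$ and expand. For example $A^{(1)}_{2,2,0}=\tfrac16\bigl(S_{12}s_{13}s_{23}+S_{13}s_{12}s_{23}+S_{23}s_{13}s_{12}\bigr)$ collapses, after eliminating $\theta_{00},\vartheta_{00}$ as above, to $\tfrac12\bigl((2\theta_{01}^8+2\theta_{01}^4\theta_{10}^4-\theta_{10}^8)\theta_{01}^2\vartheta_{01}^2+(\theta_{01}^8-2\theta_{01}^4\theta_{10}^4-2\theta_{10}^8)\theta_{10}^2\vartheta_{10}^2\bigr)$; multiplying by $2$ and using $A^{(1)}_{2,2,0}=E_{6,1}$ gives the first displayed identity, and the cases $i=2,3$ as well as $A^{(3)}_{4,0,0}=\tfrac12(\theta_{01}^4-\theta_{10}^4)(2\theta_{01}^4+\theta_{10}^4)(\theta_{01}^4+2\theta_{10}^4)=E_6$ are entirely parallel. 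The only genuinely non-formal point, and hence the main obstacle, is the determination of $c=\tfrac{22}{61}$ in the index-$3$ identity, which requires an honest evaluation of a non-constant Fourier coefficient on both sides; everything else is the root-of-unity bookkeeping of Step 1 plus routine polynomial algebra, streamlined by Jacobi's quartic relation and \eqref{c01}.
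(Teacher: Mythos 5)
Your proposal is correct and follows exactly the route the paper intends (and leaves implicit): apply Proposition \ref{prop02} to $(a,b,c)=(2,2,0)$ and $(4,0,0)$, express $\vartheta\lvert X_i$ via the classical theta constants using \eqref{eq:orbit-theta}, reduce with Jacobi's quartic relation and \eqref{c01}, and pin down each form in the one-dimensional spaces $J_{6,0},J_{6,1},J_{6,2}$ by its constant term, with the single extra Fourier-coefficient comparison in the two-dimensional space $J_{6,3}=\CC E_{6,3}\oplus\CC\,\Delta\phi_{-2,1}^3$ to fix the coefficient $\tfrac{22}{61}$. Your intermediate expressions for $S_{ij}$, $s_{ij}$ and the resulting polynomial expansions agree with the displayed identities, so the argument is complete modulo that one routine numerical check.
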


Our method can also be used to construct weak Jacobi forms of weight $0$. The following result is immediate.

\begin{proposition}\label{Prop:weak}
Let $a$, $b$, $c$ be non-negative integers satisfying $N \lvert (a+2b)$. Then the function
\begin{equation}
W_{a,b,c}^{(N)}=\sum_{Y\in R_N} \frac{(\vartheta\lvert Y)^a(\tau,z)(\vartheta\lvert Y)^b(\tau,2z)(\vartheta\lvert Y)^c(\tau,Nz)}{(\vartheta\lvert Y)^{a+b+c}(\tau,0)}
\end{equation}
is a weak Jacobi form of weight $0$ and index $\frac{a+N^2c}{2}+2b$.
\end{proposition}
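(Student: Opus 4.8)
The plan is to follow the proofs of Proposition~\ref{prop01} and Theorem~\ref{MTH} almost verbatim; the point of the denominator $(\vartheta\lvert Y)^{a+b+c}(\tau,0)$ is that it carries the \emph{same} automorphy factor as the numerator, so that the $\upsilon_\eta$-contributions cancel entirely and the weight is lowered to $0$. Write $Y=(y_1,y_2)=(u/N,v/N)$ with $0\le u,v<N$, $(u,v)\neq(0,0)$, and set
$$
F_Y(\tau,z)=\frac{(\vartheta\lvert Y)^a(\tau,z)\,(\vartheta\lvert Y)^b(\tau,2z)\,(\vartheta\lvert Y)^c(\tau,Nz)}{(\vartheta\lvert Y)^{a+b+c}(\tau,0)},\qquad W_{a,b,c}^{(N)}=\sum_{Y\in R_N}F_Y.
$$
First I would observe that $F_Y$ is holomorphic on $\HH\times\CC$: by \eqref{eq:orbit-theta} the function $(\vartheta\lvert Y)(\tau,0)$ is, up to a nonzero constant and an exponential factor, the value $\vartheta(\tau,\tfrac uN\tau+\tfrac vN)$, and since $\vartheta(\tau,w)$ vanishes only on $w\in\ZZ\tau+\ZZ$ while $(u,v)\not\equiv(0,0)\m N$, this denominator has no zero on $\HH$. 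Counting weights and indices of the factors (the factor $(\vartheta\lvert Y)(\tau,mz)$ behaves like a form of weight $\tfrac12$ and index $\tfrac{m^2}2$, while the denominator has weight $\tfrac{a+b+c}2$ and index $0$) shows that $F_Y$ transforms like a function of weight $0$ and index $\tfrac a2+2b+\tfrac{N^2c}2=\tfrac{a+N^2c}2+2b=:t$.

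Next I would run the $\SL_2(\ZZ)$-computation. For $M\in\SL_2(\ZZ)$, combining \eqref{orbit1} with \eqref{orbit3} exactly as in the proof of Theorem~\ref{MTH}, every occurrence of $\vartheta\lvert Y$ --- whether evaluated at $z$, $2z$, $Nz$ or $0$ --- is replaced by $\vartheta\lvert Y'$ times one and the same constant $c_{M,Y}=\upsilon_\eta^3(M)\,\upsilon_H([YM-Y';0])\,e^{\pi i\det(Y';YM)}$, where $Y'\in R_N$ with $Y'\equiv YM\m\ZZ^2$. Since $\vartheta\lvert Y$ enters the numerator of $F_Y$ with total multiplicity $a+b+c$ and the denominator with the same multiplicity, the factor $c_{M,Y}^{\,a+b+c}$ cancels, and one gets $F_Y\lvert M=F_{Y'}$ with no correcting factor whatsoever. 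As $Y\mapsto Y'$ permutes $R_N$, summing over $Y$ yields $W_{a,b,c}^{(N)}\lvert M=W_{a,b,c}^{(N)}$.

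For the Heisenberg part I would use \eqref{orbit2} and \eqref{orbit4}. Given $h=[h_1,h_2;r]\in H(\ZZ)$, the factor $(\vartheta\lvert Y)(\tau,mz)$ picks up $\upsilon_H(h)^{m}\,e^{2\pi i m(y_1h_2-h_1y_2)}$, using that $\upsilon_H$ is binary, hence $\upsilon_H([mh_1,mh_2;m^2r])=\upsilon_H(h)^m$ (because $m^2-m$ is even). The denominator, being constant in $z$, is unchanged. Collecting contributions from the exponents $a$ (at $z$), $b$ (at $2z$), $c$ (at $Nz$) gives
$$
F_Y\lvert h=\upsilon_H(h)^{\,a+Nc}\,\exp\!\big(2\pi i(a+2b+Nc)(y_1h_2-h_1y_2)\big)\,F_Y .
$$
Since $y_1h_2-h_1y_2=(uh_2-vh_1)/N$ and $N\lvert(a+2b)$, the exponential equals $1$; hence $F_Y\lvert h=\upsilon_H(h)^{\,a+Nc}F_Y$ for every $Y$, so $W_{a,b,c}^{(N)}\lvert h=\upsilon_H(h)^{\,a+Nc}W_{a,b,c}^{(N)}$. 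As $a+Nc\equiv a+N^2c\equiv 2t\m2$, this is precisely the character $\upsilon_H^{2t}$ allowed at index $t$. It remains to check the behaviour at the cusp: for fixed $Y$ and every $m\ge1$ the factor $(\vartheta\lvert Y)(\tau,mz)$ has the same leading $q$-exponent $\kappa_Y$ (which depends only on the $\tau$-part $u/N$ of the characteristic, the $m$- and $z$-dependence sitting entirely in the $\zeta$-monomials), and $(\vartheta\lvert Y)(\tau,0)$ has the same leading exponent $\kappa_Y$ with nonzero leading coefficient, since the corresponding theta constant is not identically zero. Therefore the common power $q^{(a+b+c)\kappa_Y}$ cancels between numerator and denominator, $F_Y$ has a Fourier expansion with only non-negative (a priori fractional) powers of $q$, and the $T$-invariance established above forces integral powers; thus $W_{a,b,c}^{(N)}$ admits an expansion $\sum_{n\ge0,\ l\in\frac12\ZZ}f(n,l)q^n\zeta^l$, and together with the functional equations this gives $W_{a,b,c}^{(N)}\in J^{\w}_{0,\,t}(\upsilon_H^{2t})$.

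The bookkeeping in the two transformation steps is the same as in Theorem~\ref{MTH} and is routine. The only point requiring genuine (if modest) care is the cusp analysis: one must confirm that the quotient $F_Y$ really has a holomorphic $q$-expansion with no negative powers, i.e. that the leading $q$-exponents of the numerator pieces $(\vartheta\lvert Y)(\tau,mz)$ and of the theta constant $(\vartheta\lvert Y)(\tau,0)$ agree, and one must not overlook the elementary but essential fact that these theta constants are nowhere zero on $\HH$ for $Y\in R_N$, which is exactly what makes each $F_Y$ holomorphic in the first place.
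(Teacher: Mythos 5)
Your proposal is correct and follows exactly the route the paper intends (it states the result as ``immediate'' from the machinery of \eqref{orbit1}--\eqref{orbit4} used in Theorem \ref{MTH}): the automorphy constants cancel between numerator and denominator, killing the weight and the $\upsilon_\eta$-character, and the Heisenberg factor dies because $N\mid(a+2b)$. Your cusp analysis is the one detail the paper suppresses, and your justification there is slightly glib --- ``the theta constant is not identically zero'' should be replaced by the explicit check that the minimal exponent $\min_{n}(n+\tfrac{u}{N}+\tfrac12)^2/2$ of $(\vartheta\lvert Y)(\tau,0)$ occurs with nonzero coefficient (when $u=0$ the two minimal terms sum to $-2\sin(\pi v/N)\neq 0$), which is what guarantees no negative $q$-powers survive in the quotient.
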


When $\vartheta_{a,b}(\tau,0)\neq 0$, we write $\xi_{a,b}(\tau,z)=\vartheta_{a,b}(\tau,z)/\vartheta_{a,b}(\tau,0)$. As a direct consequence of Proposition \ref{Prop:weak}, we have the following identities.
\begin{corollary}\label{cor:weak}
\begin{align}
&W_{2,0,0}^{(2)}:& & 4(\xi_{00}^2+\xi_{01}^2+\xi_{10}^2)(\tau,z)=\phi_{0,1}(\tau,z),&\\
&W_{0,1,0}^{(2)}:& & 2(\xi_{00}+\xi_{01}+\xi_{10})(\tau,2z)=\phi_{0,2}(\tau,z),&\\
&W_{3,0,0}^{(3)}:& & \left(\xi_{\frac{1}{6},\frac{1}{6}}^3+\xi_{\frac{1}{6},\frac{1}{2}}^3+\xi_{\frac{1}{6},\frac{5}{6}}^3+\xi_{\frac{1}{2},\frac{1}{6}}^3+\xi_{\frac{1}{2},\frac{5}{6}}^3+\xi_{\frac{5}{6},\frac{1}{6}}^3+\xi_{\frac{5}{6},\frac{1}{2}}^3+\xi_{\frac{5}{6},\frac{5}{6}}^3\right)(\tau,z)=4\frac{\vartheta(\tau,2z)}{\vartheta(\tau,z)}, &\\
&W_{0,0,1}^{(3)}:& & \left(\xi_{\frac{1}{6},\frac{1}{6}}+\xi_{\frac{1}{6},\frac{1}{2}}+\xi_{\frac{1}{6},\frac{5}{6}}+\xi_{\frac{1}{2},\frac{1}{6}}+\xi_{\frac{1}{2},\frac{5}{6}}+\xi_{\frac{5}{6},\frac{1}{6}}+\xi_{\frac{5}{6},\frac{1}{2}}+\xi_{\frac{5}{6},\frac{5}{6}}\right)(\tau,3z)=\frac{\vartheta^3(\tau,2z)}{\vartheta^3(\tau,z)}, &\\
&W_{1,1,0}^{(3)}:& & \quad \xi_{\frac{1}{6},\frac{1}{6}}(\tau,z)\xi_{\frac{1}{6},\frac{1}{6}}(\tau,2z)+
\xi_{\frac{1}{6},\frac{1}{2}}(\tau,z)\xi_{\frac{1}{6},\frac{1}{2}}(\tau,2z)+\xi_{\frac{1}{6},\frac{5}{6}}(\tau,z)\xi_{\frac{1}{6},\frac{5}{6}}(\tau,2z)&\\
\nonumber& & &+\xi_{\frac{1}{2},\frac{1}{6}}(\tau,z)\xi_{\frac{1}{2},\frac{1}{6}}(\tau,2z)+\xi_{\frac{1}{2},\frac{5}{6}}(\tau,z)\xi_{\frac{1}{2},\frac{5}{6}}(\tau,2z)+\xi_{\frac{5}{6},\frac{1}{6}}(\tau,z)\xi_{\frac{5}{6},\frac{1}{6}}(\tau,2z)&\\
\nonumber& & &+\xi_{\frac{5}{6},\frac{1}{2}}(\tau,z)\xi_{\frac{5}{6},\frac{1}{2}}(\tau,2z)+\xi_{\frac{5}{6},\frac{5}{6}}(\tau,z)\xi_{\frac{5}{6},\frac{5}{6}}(\tau,2z)=\frac{1}{3}\frac{\vartheta(\tau,2z)}{\vartheta(\tau,z)}\phi_{0,1}(\tau,z). &
\end{align}
\end{corollary}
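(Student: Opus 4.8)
The plan is to obtain each of the five identities as a direct specialization of Proposition \ref{Prop:weak}. For a line labelled $W^{(N)}_{a,b,c}$ I would first read off $N$ and $a,b,c$, verify the hypothesis $N\mid(a+2b)$ (here $2\mid2$, $2\mid2$, $3\mid3$, $3\mid0$, $3\mid3$ respectively), and conclude from the proposition that $W^{(N)}_{a,b,c}\in J^{\w}_{0,t}$ with $t=\tfrac{a+N^2c}{2}+2b$; concretely $t=1,\,2,\,\tfrac32,\,\tfrac92,\,\tfrac52$ for the five lines. So each left-hand side is automatically a weak Jacobi form of weight $0$, and what remains is to identify it inside a small, known vector space.

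Next I would unwind $W^{(N)}_{a,b,c}$ into the classical $\xi$-functions. By \eqref{eq:orbit-theta}, for $Y=(\tfrac uN,\tfrac vN)\in R_N$ the function $\vartheta\lvert Y$ is a nonzero $z$-independent constant times $\vartheta_{\frac uN+\frac12,\frac vN+\frac12}$; this constant cancels in every ratio $(\vartheta\lvert Y)(\tau,mz)/(\vartheta\lvert Y)(\tau,0)$, which is therefore equal to $\xi_{\frac uN+\frac12,\frac vN+\frac12}(\tau,mz)$. All characteristics occurring are $\neq(\tfrac12,\tfrac12)\bmod\ZZ^2$, so the theta-constants in the denominators are nonzero and the $\xi$'s are well defined. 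Reducing characteristics modulo $\ZZ^2$ via $\vartheta_{a+x,b+y}=e^{2\pi iay}\vartheta_{a,b}$, the three representatives for $N=2$ become $\xi_{00},\xi_{01},\xi_{10}$, and the eight for $N=3$ become $\xi_{\frac16,\frac16},\dots,\xi_{\frac56,\frac56}$; doing this term by term identifies $W^{(N)}_{a,b,c}$ with the stated left-hand side.

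Finally I would identify the ambient space and compare coefficients. From the ring structure $J^{\w}_{*,*}=M_*(\SL_2(\ZZ))[\phi_{0,1},\phi_{-2,1}]$ (even weight, integral index) and the reductions $J^{\w}_{0,t+1/2}=\phi_{0,\frac32}\cdot J^{\w}_{0,t-1}$, one gets $J^{\w}_{0,1}=\CC\phi_{0,1}$, $J^{\w}_{0,3/2}=\CC\phi_{0,\frac32}$, $J^{\w}_{0,5/2}=\CC\,\phi_{0,\frac32}\phi_{0,1}$, $J^{\w}_{0,2}=\langle\phi_{0,1}^2,\phi_{0,2}\rangle$, and $J^{\w}_{0,9/2}=\phi_{0,\frac32}\cdot\langle\phi_{0,1}^3,\phi_{0,1}\phi_{0,2},\phi_{0,3}\rangle$; recall also $\phi_{0,\frac32}^2=\phi_{0,3}$. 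In the three one-dimensional cases ($W^{(2)}_{2,0,0}$, $W^{(3)}_{3,0,0}$, $W^{(3)}_{1,1,0}$) it is enough to evaluate at $z=0$: since $\vartheta\lvert Y$ contributes $1$ there, $W^{(N)}_{a,b,c}(\tau,0)=\lvert R_N\rvert=N^2-1$, while $\phi_{0,1}(\tau,0)=12$ and $\phi_{0,\frac32}(\tau,0)=2$, which pins down the scalar and yields the stated identity. In the two remaining cases ($W^{(2)}_{0,1,0}$ of index $2$, $W^{(3)}_{0,0,1}$ of index $\tfrac92$) I would instead expand to order $q^0$ and match the resulting Laurent polynomials in $\zeta$; the $q^0$-parts of the relevant basis elements have pairwise distinct top $\zeta$-degrees, hence are linearly independent, so the match forces equality. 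The needed $q^0$-expansions of the $\xi_{a,b}(\tau,mz)$ read off at once from the product expansions of $\vartheta_{00},\vartheta_{01},\vartheta_{10}$ (and, for $N=3$, of $\vartheta$).

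Once Proposition \ref{Prop:weak} is granted, the argument presents no genuine difficulty: it is a finite check. The two places to be careful are the normalization step in the second paragraph---verifying that the $z$-independent constants really cancel and that no theta-constant in a denominator vanishes---and, for index $2$ and $\tfrac92$, confirming that the chosen finite set of Fourier coefficients separates the elements of the ambient space, i.e.\ that the $q^0$-terms of a basis are linearly independent as Laurent polynomials in $\zeta$, before deducing equality from a coefficient match.
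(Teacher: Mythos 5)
Your proposal is correct and follows exactly the route the paper intends: the corollary is stated there as "a direct consequence of Proposition \ref{Prop:weak}", and the implicit argument is precisely yours — check $N\mid(a+2b)$, land in $J^{\w}_{0,t}$, identify the form in the known (one-, two-, or three-dimensional) space by comparing the $z=0$ value or the $q^0$-term against $\phi_{0,1}$, $\phi_{0,2}$, $\phi_{0,\frac32}$ and their products. Your extra care about the cancellation of the $z$-independent constants from \eqref{eq:orbit-theta} and the injectivity of the $q^0$-coefficient map on the relevant bases is exactly the right bookkeeping and matches how the paper handles the analogous Corollaries \ref{coro1}--\ref{coro3}.
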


Proposition \ref{Prop:weak} may give good constructions of some Jacobi forms. For example, from the formula $2(\xi_{00}+\xi_{01}+\xi_{10})(\tau,2z)=\phi_{0,2}(\tau,z)$, it is easy to see that the function $\phi_{0,2}$ has integral Fourier coefficients because $\xi_{00}$, $\xi_{01}$ and $2\xi_{10}$ have integral Fourier coefficients. This construction has been used in \cite{GW2}.

\begin{remark}
Our approach can be generalized to the case of several variables. In other words, we can define the orbits of Jacobi forms of lattice index and use them to construct more general theta relations. For example, the well known \textit{Riemann's theta formula} (see \cite{Mu})
\begin{equation}
\prod_{i=1}^4\vartheta_{00}(\tau,z_i)+\prod_{i=1}^4\vartheta(\tau,z_i)-\prod_{i=1}^4\vartheta_{01}(\tau,z_i)-\prod_{i=1}^4\vartheta_{10}(\tau,z_i)=2\prod_{i=1}^4\vartheta(\tau,x_i),
\end{equation}
where $x_1=\frac{z_1+z_2+z_3+z_4}{2}$, $x_2=\frac{z_1+z_2-z_3-z_4}{2}$, $x_3=\frac{z_1-z_2+z_3-z_4}{2}$, $x_4=\frac{z_1-z_2-z_3+z_4}{2}$,
can be deduced by the orbit of $\prod_{i=1}^4\vartheta(\tau,z_i)$ which is a holomorphic Jacobi form of weight $2$ and index $1$ associated to the root lattice $D_4$. The interesting formula 
\begin{equation}
\prod_{i=1}^8\vartheta(\tau,z_i)+\prod_{i=1}^8\vartheta_{00}(\tau,z_i)+\prod_{i=1}^8\vartheta_{01}(\tau,z_i)+\prod_{i=1}^8\vartheta_{10}(\tau,z_i)=2\vartheta_{E_8},
\end{equation}
where 
$$
\vartheta_{E_8}(\tau,\mathfrak{z}_8)=\sum_{v\in E_8}e^{\pi i((v,v)\tau+2 (v,\mathfrak{z}_8) )},
$$
is the theta function of root lattice $E_8$, can be concluded from the orbit of $\prod_{i=1}^8\vartheta(\tau,z_i)$ which is a holomorphic Jacobi form of weight $4$ and index $1$ with respect to the root lattice $D_8$. Besides, the following formula which is a generalization of $(\ref{c03})$
\begin{equation}\label{eq:Riemann3}
\sum_{Y\in R_3} (\vartheta \lvert Y)^{3}(\tau,z_1)(\vartheta \lvert Y)^3 (\tau,z_2)=2\vartheta^3(\tau,z_1)\vartheta^3(\tau,z_2),
\end{equation}
can be derived from the orbit of $\vartheta(\tau,z_1)\vartheta(\tau,z_2)$ which is a Jacobi form of weight $1$ and index $\frac{1}{2}$ with respect to the lattice $\latt{2}\oplus \latt{2}$.
In some sense, we can view (\ref{eq:Riemann3}) as a Riemann's theta formula of order $3$. We will systematically investigate the orbits of Jacobi forms of lattice index in a separate article.
\end{remark}

\section{Applications}\label{sec:5}
In this section, we present several applications of our formulas. Firstly, as an application of (\ref{J01}), we give a simple proof of the following interesting formulae in \cite[Lemma 1]{S}.
\begin{corollary}
Assume that $N$ is odd and $v_1,v_2$ run through a set denoted by $\mathfrak{A}$ of representatives of $\frac{1}{N}(\ZZ\tau+\ZZ)/(\ZZ\tau+\ZZ)$. Then we have
\begin{align*}
\prod_{v_1 \neq v_2}[\wp(\tau,z+v_1)-\wp(\tau,z+v_2)]&=c_1 \left(\frac{\vartheta(\tau,2Nz)}{\vartheta^4(\tau,Nz)} \right)^{N^2-1}\eta^{(4N^2+3)(N^2-1)}(\tau),\\
\prod_{ \substack{v_1,v_2\neq0\\ v_1 \neq v_2}}[\wp(\tau,v_1)-\wp(\tau,v_2)]&=c_2 \Delta^{\frac{(N^2-1)(N^2-3)}{6}}(\tau),
\end{align*}
where $c_1,c_2$ are nonzero constants depending only on $N$ and $\wp$ is the Weierstrass $\wp$-function 
$$\wp(\tau,z)=\frac{1}{z^2}+\sum_{\substack{\omega\in \ZZ\tau +\ZZ \\
\omega \neq 0}}\left((z+\omega )^{-2}-\omega^{-2}  \right).$$
\end{corollary}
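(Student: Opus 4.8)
The plan is to express the Weierstrass $\wp$-function differences in terms of Jacobi theta functions and then apply the product formula (\ref{J01}). Recall the classical identity
\[
\wp(\tau,z_1)-\wp(\tau,z_2)=-\frac{\vartheta(\tau,z_1+z_2)\vartheta(\tau,z_1-z_2)}{\vartheta^2(\tau,z_1)\vartheta^2(\tau,z_2)},
\]
which follows from the fact that both sides are elliptic functions of $z_1$ with the same divisor and the same leading behaviour at $z_1=0$. Writing $v_j=\frac{u_j}{N}\tau+\frac{w_j}{N}$ for the representatives in $\mathfrak{A}$, each factor $\wp(\tau,z+v_1)-\wp(\tau,z+v_2)$ becomes a ratio of $\vartheta$ evaluated at translates of $Nz$-type arguments by fractional lattice points; by the translation formulas for $\vartheta$ (the displayed formulas just before \S\ref{sec:3}), $\vartheta(\tau,(z')+a\tau+b)$ equals $\vartheta_{a+\frac12,b+\frac12}(\tau,z')$ up to an explicit exponential factor of the form $\exp(-\pi i(a^2\tau+2az'+\cdots))$.

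First I would take the full product over the ordered pairs $v_1\neq v_2$ in $\mathfrak{A}$ (which has $N^2$ elements, so there are $N^2(N^2-1)$ ordered pairs). The numerators, after collecting exponential prefactors, reorganize into a product of $\vartheta_{\frac{u}{N}+\frac12,\frac{w}{N}+\frac12}(\tau,2Nz)$ over all nonzero $(u,w)\bmod N$, each raised to a suitable power counting how often a given difference $v_1\pm v_2$ lands in each residue class --- here one uses that $N$ is odd, so that $2$ is invertible mod $N$ and the map $(v_1,v_2)\mapsto(v_1+v_2,v_1-v_2)$ is a bijection on $\mathfrak{A}\times\mathfrak{A}$. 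The denominators contribute $\vartheta^{2N(N^2-1)}(\tau,Nz)$ after noting each $v_j$ appears in $N^2-1$ pairs. Then (\ref{J01}) with argument $2Nz$ converts the numerator product into $\eta^{N^2-1}(\tau)\,\vartheta(\tau,2N\cdot Nz)/\vartheta(\tau,2Nz)$ --- wait, more carefully, into $(-1)^{N-1}\eta^{N^2-1}\vartheta(\tau,2N^2z)/\vartheta(\tau,2Nz)$; combined with the $\eta$-powers from the accumulated exponential prefactors (which are pure powers of $q$ and roots of unity, hence absorbable into an $\eta$-power and a constant) this yields the asserted shape $c_1(\vartheta(\tau,2Nz)/\vartheta^4(\tau,Nz))^{N^2-1}\eta^{(4N^2+3)(N^2-1)}$. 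The exponent bookkeeping for the $\eta$-power is the routine but delicate part: one matches weights (each $\wp$-difference is weight $2$, so the left side has weight $2N^2(N^2-1)$) and the multiplier system to pin down the $\eta$-exponent without computing the constant.

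The second formula is the specialization of the first (or of the intermediate product identity) at $z=0$, except that the pairs with $v_1=-v_2$ now force a numerator factor $\vartheta(\tau,0)=0$, so one must excise $v_1=0$, $v_2=0$, and $v_1=-v_2$. For the restricted product over $v_1,v_2\neq 0$, $v_1\neq v_2$, I would again use $\wp(\tau,v_1)-\wp(\tau,v_2)=-\vartheta(\tau,v_1+v_2)\vartheta(\tau,v_1-v_2)/(\vartheta^2(\tau,v_1)\vartheta^2(\tau,v_2))$, evaluate the translates via the theta transformation formulas, and reorganize. The numerator becomes a product of $\theta_{\frac uN+\frac12,\frac wN+\frac12}(\tau)$ over nonzero classes, each to an appropriate multiplicity, handled by (\ref{J02}); the denominator similarly. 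The combinatorics of how many ordered pairs $(v_1,v_2)$ give each value of $v_1\pm v_2$ (now excluding the degenerate ones) is the main obstacle here, and leads after simplification to the exponent $\frac{(N^2-1)(N^2-3)}{6}$ on $\Delta=\eta^{24}$; again weight-counting (left side has weight $2(N^2-1)(N^2-4)$... one should double-check: $(N^2-1)$ choices for $v_1$, $(N^2-2)$ for $v_2$, but pairs with $v_1=-v_2$ removed leave $(N^2-1)(N^2-2)-(N^2-1)=(N^2-1)(N^2-3)$ ordered pairs, weight $2(N^2-1)(N^2-3)=24\cdot\frac{(N^2-1)(N^2-3)}{6}$, consistent with $\Delta$) and holomorphy/non-vanishing fix the $\Delta$-power, leaving only the constant $c_2$ undetermined. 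Throughout, the fact that no nonconstant holomorphic modular form can be a pure power of $\Delta$ times a unit unless everything matches means the identities are forced once divisors and weights agree; the constants $c_1,c_2$ are then simply not needed.
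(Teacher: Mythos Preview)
Your strategy is exactly the paper's: rewrite each $\wp$-difference via the theta identity, convert the translates to theta functions with characteristics, reorganize using the combinatorics of $v_1\pm v_2$ (the paper phrases this as: the multiset $\{v_1+v_2:v_1\neq v_2\}$ covers $\mathfrak{A}$ with uniform multiplicity $N^2-1$, and $\{v_1-v_2:v_1\neq v_2\}$ covers $\mathfrak{A}\setminus\{0\}$ with multiplicity $N^2$), then apply (\ref{J01}) and (\ref{J02}). But several of your concrete steps would fail as written. First, the $\wp$-difference identity carries the factor $4\pi^2\eta^6(\tau)$ in the numerator; this contributes $\eta^{6N^2(N^2-1)}$ to the product and is the dominant source of the final $\eta$-power. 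Your weight-matching shortcut would still recover the exponent, but only once the $z$-shape is correct --- and that is where the real slips are. The numerator factors $\vartheta(\tau,2z+v_1+v_2)$ become $\vartheta_{v_1+v_2}(\tau,2z)$ with argument $2z$, not $2Nz$; applying (\ref{J01}) at $2z$ gives $\eta^{N^2-1}\vartheta(\tau,2Nz)/\vartheta(\tau,2z)$, and combined with the $v_1+v_2\equiv 0$ contributions the numerator $z$-part is, up to a constant, $\eta^{(N^2-1)^2}\vartheta(\tau,2Nz)^{N^2-1}$. Likewise each $v\in\mathfrak{A}$ appears in the denominator to the power $4(N^2-1)$ (it occurs as $v_1$ in $N^2-1$ pairs and as $v_2$ in $N^2-1$ pairs, squared each time), so after (\ref{J01}) the denominator is, up to a constant, $\eta^{4(N^2-1)^2}\vartheta(\tau,Nz)^{4(N^2-1)}$, not $\vartheta^{2N(N^2-1)}(\tau,Nz)$. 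With these three corrections the $z$-dependence comes out as $(\vartheta(\tau,2Nz)/\vartheta^4(\tau,Nz))^{N^2-1}$, and then either the direct $\eta$-bookkeeping or your weight argument yields the exponent $(4N^2+3)(N^2-1)$.

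For the second identity the paper says only that the argument is similar; your observation about the degenerate $v_1=-v_2$ pairs and your weight count matching the $\Delta$-exponent are exactly what is needed there.
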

\begin{proof}
By the following identity
$$ \wp(\tau,z_1)-\wp(\tau,z_2)=\frac{4\pi^2 \eta^6(\tau)\vartheta(\tau,z_1+z_2)\vartheta(\tau,z_1-z_2)}{\vartheta^2(\tau,z_1)\vartheta^2(\tau,z_2)},$$
we check at once
\begin{align*}
&\prod_{v_1 \neq v_2}[\wp(\tau,z+v_1)-\wp(\tau,z+v_2)]\\
=&*\eta^{6N^2(N^2-1)}\prod_{v_1 \neq v_2}\frac{\vartheta(\tau,2z+v_1+v_2)\vartheta(\tau,v_1-v_2)}{\vartheta^2(\tau,z+v_1)\vartheta^2(\tau,z+v_2)}\\
=&*\eta^{6N^2(N^2-1)}\prod_{v_1 \neq v_2}\frac{\vartheta_{v_1+v_2}(\tau,2z)\theta_{v_1-v_2}(\tau)}{\vartheta_{v_1}^2(\tau,z)\vartheta_{v_2}^2(\tau,z)}
\end{align*}
where $*$ are constants. The notation $\vartheta_{v}$ stands for $\vartheta_{a+1/2,b+1/2}$ if $v=a\tau+b$. One can check that the set $\{v_1+v_2: v_1\neq v_2\in \mathfrak{A}  \}$ contains every element of $\mathfrak{A}$ and each element appears $N^2-1$ times. 
Similarly, we find that the set $\{v_1-v_2: v_1\neq v_2\in \mathfrak{A}  \}$ contains every element of $\mathfrak{A}$ except $0$ and each element appears $N^2$ times.
We thus prove the first formula by \eqref{J01} and \eqref{J02}. The proof of the second formula is similar. 
\end{proof}

\begin{remark}
When $N=2$, we can prove the following identities in the same way
\begin{align*}
\prod_{1\leq i<j \leq 4}[\wp(\tau,z+v_i)-\wp(\tau,z+v_j)]&=c_3\eta^{30}\left( \frac{\vartheta(\tau,4z)}{\vartheta^4(\tau,2z)}  \right)^2, \\
\prod_{1< i<j \leq 4}[\wp(\tau,v_i)-\wp(\tau,v_j)]&=c_4\eta^{12},
\end{align*}
where $c_3$, $c_4$ are constants, $v_1=0$, $v_2=1/2$, $v_3=\tau /2$ and $v_4=(\tau+1)/2$. In general, we do not have this type of formula when $N>2$ is even because the set $\{v_1+v_2: v_1\neq v_2\in \mathfrak{A}  \}$ does not satisfy the condition in the above proof.
\end{remark}

As another application, we generalize the Jacobi's derivative formula: $\vartheta^{'}(\tau,0)=\pi i \theta_{00}\theta_{01}\theta_{10}$. We first give it a simple proof.

\begin{lemma}
$$\vartheta'(\tau,0)=2\pi i \eta(\tau)^3, \; \vartheta{''}(\tau,0)=0, \; \vartheta{'''}(\tau,0)=48\pi^3 i \eta(\tau)^3G_2(\tau).$$
\end{lemma}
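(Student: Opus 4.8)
The plan is to extract the first three Taylor coefficients of $\vartheta(\tau,z)$ in $z$ at $z=0$ directly from the Jacobi triple product. Since $\vartheta$ vanishes to first order at $z=0$, write $\vartheta(\tau,z) = c_1(\tau) z + c_3(\tau) z^3 + O(z^5)$, where the even-order coefficients vanish because $\vartheta(\tau,-z) = -\vartheta(\tau,z)$ (immediate from the defining series or the product, since $\zeta^{1/2}-\zeta^{-1/2}$ is odd and the product is even in $z \mapsto -z$, i.e. $\zeta \mapsto \zeta^{-1}$); this already gives $\vartheta''(\tau,0)=0$. Then $\vartheta'(\tau,0) = c_1(\tau)$ and $\vartheta'''(\tau,0) = 6 c_3(\tau)$.

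First I would compute $c_1(\tau)$. Using the product formula, $\vartheta(\tau,z) = q^{1/8}(\zeta^{1/2}-\zeta^{-1/2})\prod_{n\geq 1}(1-q^n\zeta)(1-q^n\zeta^{-1})(1-q^n)$. As $z\to 0$ we have $\zeta^{1/2}-\zeta^{-1/2} = 2\pi i z + O(z^3)$ and $(1-q^n\zeta)(1-q^n\zeta^{-1}) \to (1-q^n)^2$, so $c_1(\tau) = 2\pi i\, q^{1/8}\prod_{n\geq 1}(1-q^n)^3 = 2\pi i\, \eta(\tau)^3$, using $q^{1/8}$... wait, $\eta = q^{1/24}\prod(1-q^n)$ so $\eta^3 = q^{1/8}\prod(1-q^n)^3$, giving exactly $\vartheta'(\tau,0) = 2\pi i\,\eta(\tau)^3$. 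Alternatively this follows from differentiating the known relation $\vartheta = -i\vartheta_{11}$ together with $\phi_{-1,1/2} = \vartheta/\eta^3 \in J^{\w}_{-1,1/2}$ having leading $q$-expansion $\zeta^{1/2}-\zeta^{-1/2}+O(q)$, whose $z$-derivative at $0$ is $2\pi i$.

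For the third coefficient I would take the logarithmic derivative of the product: from $\log\vartheta(\tau,z) = \tfrac18\log q + \log(\zeta^{1/2}-\zeta^{-1/2}) + \sum_{n\geq1}\big[\log(1-q^n\zeta)+\log(1-q^n\zeta^{-1})+\log(1-q^n)\big]$, expanding each term in powers of $z$ (using $\zeta = e^{2\pi i z}$), collecting the coefficient of $z^2$ in $\vartheta(\tau,z)/(2\pi i z\,\eta^3)$, and recognizing the resulting $q$-series as a multiple of $G_2(\tau)$ (the weight-$2$ Eisenstein series, normalized so that $\vartheta'''(\tau,0)/\vartheta'(\tau,0) = 24\pi^2 G_2$, i.e. $-4\pi^2 E_2(\tau)$ in the usual normalization up to constants — the paper uses $G_2$ and its own constant). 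Concretely, $\log(\zeta^{1/2}-\zeta^{-1/2}) - \log(2\pi i z) = \log\big(\tfrac{\sin(\pi z)}{\pi z}\big) = -\tfrac{\pi^2 z^2}{6} + O(z^4)$, while $\sum_{n\geq1}\log\frac{(1-q^n\zeta)(1-q^n\zeta^{-1})}{(1-q^n)^2} = \sum_{n\geq1}\log\big(1 - \tfrac{q^n(\zeta-1)(1-\zeta^{-1})}{(1-q^n)^2}\big)$ contributes $-(2\pi z)^2 \sum_{n\geq1}\frac{q^n}{(1-q^n)^2} + O(z^4) = -4\pi^2 z^2 \sum_{n\geq1}\sigma_1(n)q^n + O(z^4)$. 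Adding these, $\vartheta(\tau,z) = 2\pi i z\,\eta^3\big(1 - \pi^2 z^2(\tfrac16 + 4\sum\sigma_1(n)q^n) + O(z^4)\big)$, so $c_3 = 2\pi i\,\eta^3 \cdot(-\pi^2)(\tfrac16 + 4\sum\sigma_1 q^n)$ and $\vartheta'''(\tau,0) = 6c_3 = -12\pi^3 i\,\eta^3(\tfrac16 + 4\sum\sigma_1 q^n)$; matching to the stated form $48\pi^3 i\,\eta^3 G_2$ then fixes $G_2(\tau) = -\tfrac14(\tfrac16 + 4\sum_{n\geq1}\sigma_1(n)q^n) = -\tfrac{1}{24} - \sum_{n\geq1}\sigma_1(n)q^n$, which is indeed a standard normalization of the quasimodular Eisenstein series of weight $2$.

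The only real obstacle is bookkeeping: keeping track of the normalization of $G_2$ the paper intends and making sure the $z^2$-expansion of the infinite product is collected correctly (it is cleanest to expand $\log\big(1-\tfrac{q^n(2-\zeta-\zeta^{-1})}{(1-q^n)^2}\big)$ and note $2-\zeta-\zeta^{-1} = 4\sin^2(\pi z) = 4\pi^2 z^2 + O(z^4)$, so only the linear-in-the-log term matters at order $z^2$). Everything else is a routine Taylor expansion, and $\vartheta''(\tau,0)=0$ is免费 from the odd symmetry. I would present the odd-symmetry remark, then the product-expansion computation of $c_1$ and $c_3$ in one short display, and finish by identifying the $G_2$-series.
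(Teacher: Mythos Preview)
Your approach is valid and genuinely different from the paper's, but there is a sign slip in the key step.

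The paper argues via the automorphic correction $\exp(-4\pi^2 G_2(\tau)z^2)\,\vartheta(\tau,z)/\eta^3(\tau)=\sum_k f_k(\tau)z^k$: each $f_k$ is a modular form of weight $k-1$ on $\SL_2(\ZZ)$, so $f_1$ is constant and $f_3=0$ because $M_2(\SL_2(\ZZ))=0$; unwinding $f_3=0$ gives $\vartheta'''(\tau,0)=24\pi^2 G_2\,\vartheta'(\tau,0)$, and the constant $f_1=2\pi i$ is read off from the leading term. Your direct Taylor expansion of the triple product is more elementary (no modularity or structure theory needed) but trades conceptual input for explicit computation; both routes are legitimate.

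The sign error: you assert that the infinite product contributes $-(2\pi z)^2\sum_n q^n/(1-q^n)^2$ to the $z^2$-term of $\log\bigl(\vartheta/(2\pi i z\,\eta^3)\bigr)$, but in fact $(\zeta-1)(1-\zeta^{-1})=-(2-\zeta-\zeta^{-1})=-4\sin^2(\pi z)\approx -4\pi^2 z^2$, so
\[
\log\Bigl(1-\tfrac{q^n(\zeta-1)(1-\zeta^{-1})}{(1-q^n)^2}\Bigr)\approx +\frac{4\pi^2 z^2\,q^n}{(1-q^n)^2}.
\]
(Your later parenthetical $\log\bigl(1-\tfrac{q^n(2-\zeta-\zeta^{-1})}{(1-q^n)^2}\bigr)$ should read $1+\cdots$.) With the correct sign the $z^2$-coefficient of the logarithm is $4\pi^2\bigl(-\tfrac{1}{24}+\sum_{n\geq 1}\sigma_1(n)q^n\bigr)=4\pi^2 G_2(\tau)$ exactly in the paper's normalization $G_2=-\tfrac{1}{24}+\sum\sigma_1(n)q^n$, and $\vartheta'''(\tau,0)=6c_3=48\pi^3 i\,\eta^3 G_2$ falls out directly. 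Your concluding claim that $G_2=-\tfrac{1}{24}-\sum\sigma_1(n)q^n$ is ``a standard normalization'' is precisely the symptom of this sign slip; no such normalization is in use.
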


\begin{proof}
We consider the automorphic correction of $\frac{\vartheta(\tau,z)}{\eta(\tau)^3}$ (see \cite[Proposition 1.5]{G2}):
$$  \exp(-4\pi^2  G_2(\tau)z^2 )\frac{\vartheta(\tau,z)}{\eta(\tau)^3}= \sum_{k\geq 0}f_{k} (\tau ) z^{k}$$
where $G_2(\tau)= -\frac{1}{24} + \sum_{n \geq 1} \sigma_1 (n) q^n $ is the quasi-modular Eisenstein series of weight $2$. 
We can check that the function $f_{k}$ is a modular form of weight $k-1$ on $\SL_2(\ZZ)$. Hence $f_1(\tau)$ is a constant and $f_2=f_3=0$, which imply the lemma.
\end{proof}

It is obvious that $\vartheta_{01}'(\tau,0)=\vartheta{'}_{10}(\tau,0)=\vartheta{'}_{00}(\tau,0)=0$ because these functions are all even. Next, we calculate their second derivatives.

We take the third derivative with respect to $z$ in (\ref{ap01}):
$\vartheta(\tau,z)\vartheta_{01}\vartheta_{10}\vartheta_{00}=\eta^3 \vartheta(\tau,2z)$, and set $z=0$ in the obtained identity. Then we get
\begin{equation}\label{ap02}
-\pi^2E_2(\tau)=\frac{\vartheta{'''}(\tau,0)}{\vartheta{'}(\tau,0)}=\frac{\vartheta{''}_{01}(\tau,0)}{\theta_{01}(\tau)}+\frac{\vartheta{''}_{10}(\tau,0)}{\theta_{10}(\tau)}+\frac{\vartheta{''}_{00}(\tau,0)}{\theta_{00}(\tau)}.
\end{equation}
By taking the second derivative with respect to $z$ and setting $z=0$ in (\ref{c01}), we obtain
\begin{equation}\label{ap03}
\vartheta{''}_{01}(\tau,0)\theta_{01}(\tau)^3+\vartheta{''}_{10}(\tau,0)\theta_{10}(\tau)^3-\vartheta{''}_{00}(\tau,0)\theta_{00}(\tau)^3=0.
\end{equation}
By using (\ref{c00}) and (\ref{c01}), we can show
\begin{equation}
\vartheta^2=\frac{\theta_{01}^2\vartheta_{10}^2-\theta_{10}^2\vartheta_{01}^2}{\theta_{00}^2},
\end{equation}
because they all belong to $J_{1,1}(\upsilon_{\eta}^{6})$. By taking the second derivative with respect to $z$ and setting $z=0$ in the above formula, we get
\begin{equation}\label{ap04}
\frac{\vartheta{''}_{01}(\tau,0)}{\theta_{01}(\tau)}-\frac{\vartheta{''}_{10}(\tau,0)}{\theta_{10}(\tau)}=\pi^2\theta_{00}(\tau)^4.
\end{equation}
By (\ref{ap02}), (\ref{ap03}) and (\ref{ap04}), we get
\begin{align}
& \frac{\vartheta{''}_{01}(\tau,0)}{\theta_{01}(\tau)}=-\frac{\pi^2}{3} E_2+\frac{\pi^2}{3}(\theta_{00}^4+\theta_{10}^4),\\
& \frac{\vartheta{''}_{10}(\tau,0)}{\theta_{10}(\tau)}=-\frac{\pi^2}{3} E_2-\frac{\pi^2}{3}(\theta_{00}^4+\theta_{01}^4),\\
& \frac{\vartheta{''}_{00}(\tau,0)}{\theta_{00}(\tau)}=-\frac{\pi^2}{3} E_2+\frac{\pi^2}{3}(\theta_{01}^4-\theta_{10}^4).
\end{align}

Besides, if we take a special value on $z$ in a given identity, we will get an identity that relates special values of Jacobi--Eisenstein series to theta constants. For example, by taking $z=\frac{1}{2}$ in (\ref{ap05}) and (\ref{ap06}), we get
\begin{align*}
&\theta_{00}^4\theta_{01}^4=E_{4,2}\Big(\tau,\frac{1}{2}\Big)=E_{4,4}\Big(\tau,\frac{1}{2}\Big),& &\theta_{10}^8=E_4-E_{4,2}\Big(\tau,\frac{1}{2}\Big).&
\end{align*}
Taking $z=\frac{1}{2}$ in (\ref{ap07}), we have
$$ (\theta_{00}^4+\theta_{01}^4)\theta_{10}^8=E_{6,2}\Big(\tau,\frac{1}{2}\Big)-E_6.$$ 
Note that more this type of identities can be found in \cite{GW1}.

Finally, we remark that our approach can also be used to construct Jacobi forms on congruence subgroups. For instance, if we fix $X=(\frac{1}{2},\frac{1}{2})$ and consider the orbit of $\vartheta$ with respect to the set $X\cdot \Gamma_0(2)/\ZZ^2=\{(\frac{1}{2},0),(\frac{1}{2},\frac{1}{2})\}$, then we can construct 
$$\frac{1}{2}\left[\theta_{00}^2(\tau)\vartheta_{00}^2(\tau,z)+\theta_{01}^2(\tau)\vartheta_{01}^2(\tau,z) \right]=1+q(\zeta^{\pm 2}+8\zeta^{\pm 1}+6)+O(q^2)\in J_{2,1}(\Gamma_0(2)),$$
which is a holomorphic Jacobi form of weight 2 and index 1 on $\Gamma_0(2)$. In \cite{BSZ}, the authors gave explicit formulas for Jacobi--Eisenstein series of weight 2 and index 1 on $\Gamma_0(2)$. For any prime number $p$, they proved that the function 
\begin{equation}
E_{2,1,p}(\tau,z)=\sum_{\substack{ n\in\NN, r\in \ZZ\\ 4n\geq r^2}} H^{(p)}(4n-r^2) q^n\zeta^r
\end{equation}
is a holomorphic Jacobi form of weight 2 and index 1 on $\Gamma_0(p)$. Here, for $D\geq 0$, $H(D)$ is the Hurwitz class number and
\begin{equation}
H^{(p)}(D)=H(p^2D)-pH(D).
\end{equation}
By $\dim J_{2,1}(\Gamma_0(2))=1$, we obtain 
\begin{equation}
\theta_{00}^2(\tau)\vartheta_{00}^2(\tau,z)+\theta_{01}^2(\tau)\vartheta_{01}^2(\tau,z)=24\sum_{\substack{ n\in\NN, r\in \ZZ\\ 4n\geq r^2}} H^{(2)}(4n-r^2) q^n\zeta^r.
\end{equation}
Let $z=0$, we have
\begin{equation}
\theta_{00}^4(\tau)+\theta_{01}^4(\tau)=24\sum_{n\in \NN} \sum_{\substack{ r\in \ZZ \\ \abs{r}\leq 2\sqrt{n}}} H^{(2)}(4n-r^2) q^n.
\end{equation}
Let $z=\frac{1}{2}$, we obtain 
\begin{equation}
\theta_{00}^2(\tau)\theta_{01}^2(\tau)=12\sum_{n\in \NN} \sum_{\substack{ r\in \ZZ \\ \abs{r}\leq 2\sqrt{n}}} (-1)^r H^{(2)}(4n-r^2) q^n.
\end{equation}
Similarly, by taking $z=\frac{\tau}{2}$ and $z=\frac{\tau+1}{2}$, we get formulas expressing $\theta_{00}^2\theta_{10}^2$ and $\theta_{01}^2\theta_{10}^2$, respectively. 

Moreover, it is obvious that $E_{2,1,p}(\tau,0)\in M_2(\Gamma_0(p)) $ and
\begin{equation}
pE_2(p\tau)-E_2(\tau)=p-1+24\sum_{n\geq 1}\sum_{\substack{ d\vert n, d>0\\ (d,p)=1}} d q^n \in M_2(\Gamma_0(p)) .
\end{equation}
Hence, when $\dim M_2(\Gamma_0(p))=1$ (such as $p=2,3,5,7,13$), we have 
\begin{equation}
\sum_{\substack{ r\in \ZZ \\ \abs{r}\leq 2\sqrt{n}}} H^{(p)}(4n-r^2)=2\sum_{\substack{ d\vert n, d>0\\ (d,p)=1}} d.
\end{equation}

\bigskip

\noindent
\textbf{Acknowledgements.} The first author was supported by the HSE University Basic Research Program and by the PRCI SMAGP 
(ANR-20-CE40-0026-01). 

\bibliographystyle{amsplain}

\end{document}